\documentclass[12pt]{amsart}



\usepackage{amssymb}
\usepackage[bookmarks]{hyperref}
\usepackage[utf8]{inputenc}

\usepackage{enumerate}

\usepackage{graphicx}

\makeatletter
\@namedef{subjclassname@2010}{%
  \textup{2010} Mathematics Subject Classification}
\makeatother



\newtheorem{thm}{Theorem}[section]
\newtheorem{cor}[thm]{Corollary}

\newtheorem{prop}[thm]{Proposition}


\newtheorem{mainthm}[thm]{Main Theorem}


\theoremstyle{definition}
\newtheorem{defin}[thm]{Definition}


\newtheorem*{xrem}{Remark}


\numberwithin{equation}{section}


\frenchspacing

\textwidth=13.5cm
\textheight=23cm
\parindent=16pt
\oddsidemargin=-0.5cm
\evensidemargin=-0.5cm
\topmargin=-0.5cm




\begin{document}


\baselineskip=17pt



\title{Tameness in the Fréchet spaces of analytic functions}

\author{Aydın Aytuna}
\address{MDBF, Sabancı Üniversitesi. Orhanlı, Tuzla 34956 İstanbul}
\email{aytuna@sabanciuniv.edu}

\subjclass[2000]{Primary 46A61, 46E10, 32A70, 46A63; Secondary 32U15}
\dedicatory{Dedicated to the memory of Tosun Terzioğlu}
\keywords{tameness of Fréchet spaces, analytic functions spaces, linear topological invariants}

\begin{abstract}
A Fréchet space, $\mathcal{X}$, with a sequence of generating semi-norms $\{\|\cdot\|_k\}_{k=1}^\infty$ is called \emph{tame} in case there exists an increasing function $\sigma: \ \mathbb{N} \rightarrow \mathbb{N}$, such that for every continuous linear operator $T$ from $\mathcal{X}$ into itself, there exists an $N_0$ and $C>0$ such that 
\[
    \|T(x)\|_n \leq C\|x\|_{\sigma(n)}, \quad \forall x \in \mathcal{X} \text{ and } n \geq N_0.
\]
This property does not depend upon the choice of fundamental system of semi-norms for $\mathcal{X}$ and is a property of the Fréchet space $\mathcal{X}$. In this paper we investigate tameness in the Fréchet spaces $\mathcal{O}(M)$ of analytic functions on Stein manifolds $M$ equipped with the compact open topology. Actually we will look into tameness in the more general class of nuclear Fréchet spaces with the properties $\underline{DN}$ and $\Omega$ of Vogt and then specialize to analytic function spaces. We will show that for a Stein manifold $M$, tameness of $\mathcal{O}(M)$ is equivalent to the hyperconvexity of $M$.
\end{abstract}

\maketitle

\section{Introduction}
Tameness of Fréchet spaces is an important property frequently used in functional analysis since it brings a kind of control on the otherwise chaotic behaviour of continuous linear endomorphisms. This concept was used effectively in the structure theory of nuclear Fréchet spaces, especially in finding bases in complemented subspaces of certain infinite type power series spaces \cite{DF}. Frequently the Fréchet spaces that arise in practical applications (e.g. in non-linear analysis), enjoy (versions of) the tameness property (\cite{HAMILTON, Z}). In fact, an inquiry about the tameness of the analytic function spaces was conveyed, sometime ago, in this context by D. Zarnadze [private communication]. In this paper we answer this question in a kind of negative way. We show that there are no tame analytic function spaces other than the natural ones (Theorem \ref{theorem:3}).

The organization of the paper is as follows: After establishing the notation and terminology, in section 1, we recall the definition of the linear topological invariants $\underline{DN}$ and $\Omega$ and introduce the technical tool that we will use in the later sections, namely local imbeddings of power series spaces of finite type into Fréchet spaces. After some general results on local imbeddings we establish a link between the approximate diametral dimension of the space and the existence of effective local imbeddings from certain finite type power series spaces (Theorem \ref{theorem:1}).

Section 2 is devoted to Theorem \ref{theorem:2} and its proof which characterizes tame nuclear Fréchet spaces having a finitely nuclear stable exponent sequence and enjoying the properties $\underline{DN}$ and $\Omega$.

In the first subsection of section 3 we show how the local imbeddings can be used to construct Green's functions on complex manifolds. The second subsection of the same section is devoted to diametral dimension considerations of analytic function spaces and the proof of the main theorem Theorem \ref{theorem:3} of this paper.

\subsection*{Notations and Terminology:}
We will use the terminology of \cite{M-V} and refer the reader to this book for the undefined concepts and the standard results of functional analysis that we will use. For the notions from complex potential theory used (especially in section 3) we refer the reader to \cite{KLIMEK}.

Power series sequence spaces play an important role in this paper. Recall that these are Fréchet spaces 
\[
    \Lambda_R(\alpha) \doteq \{(\xi_n)^\infty_{n=0}: \  |(\xi_n)|_r \doteq \left( \sum_n |\xi_n|^2 e^{2r\alpha_n} \right)^\frac{1}{2} < \infty, \quad \forall\ -\infty<r<R\}
\]
where $R$ is either $1$ or $\infty$, and $\alpha = (\alpha_n)_n$ is a sequence of complex numbers with $\sup \frac{\ln (n)}{\alpha_n} < +\infty$, called the \emph{exponent sequence} of the space. The grading on these spaces will be the hilbertian grading $\{|\cdot|_r\}_{r<R}$.

These spaces are referred to as \emph{finite type power} series spaces if $R=1$ and \emph{infinite type power} series spaces if $R=\infty$, respectively. We will use the notation $\Lambda_1[\alpha]$ to denote the Hilbert space $\{(\xi_n)^\infty_{n=0}: \ |(\xi_n)|_r \doteq \left( \sum_n |\xi_n|^2 e^{2r\alpha_n}\right)^\frac{1}{2},$ $\forall \ 0<r<1\}$ with the norm $|\cdot|_r$, $r \in \mathbb{R}$.

For a pair of Fréchet spaces $\{\mathcal{X}, \ ^{\|\cdot\|_k}\}_k$ and $\{\mathcal{Y}, \ ^{|\ |_k}\}_k$ the set of all continuous linear operators will be denoted by $\mathcal{L}(\mathcal{X}, \mathcal{Y})$. We will use the symbol
\[
    \|T\|^n_m \doteq \sup_{\|x\|_{n\leq 1}} |T x|_m, \quad n,m \in \mathbb{N}
\]
for a $T \in \mathcal{L}(\mathcal{X}, \mathcal{Y})$. Note that $\|\cdot\|_m^n$ can take the value $+\infty$.

For a Fréchet space, $\{\mathcal{X}, \ ^{\|\cdot\|_k}\}_k$, the local Hilbert spaces corresponding to the norm $\|\ \|_k$ will be denoted by $\mathcal{X}_{k}$, $k=1,2,\dots$. The closed unit ball in $\mathcal{X}$ corresponding to the semi-norm $\|\ \|_k$ will be denoted by $U_k$, $k=1,2,\dots$.

Throughout the paper we will reserve the symbol $\epsilon_n$ to denote the sequence $(0,0,\dots,0,1,0,\dots)$ where $1$ is in the $n^{\text{th}}$ place.

\section{Local Imbeddings of Power Series Spaces of Finite Type into Fréchet Spaces}
In this section we will examine local imbeddings of power series spaces of finite type into nuclear Fréchet spaces with the properties $\underline{DN}$ and $\Omega$.

In subsection \ref{subs:1.1} we recall the definition of the linear topological invariants $\underline{DN}$ and $\Omega$ of Vogt and list some properties of Fréchet spaces with properties $\underline{DN}$ and $\Omega$, that will be used in the sequel. We emphasize again that all the Fréchet spaces that we will deal with in this paper will be assumed to be nuclear and satisfy the properties $\underline{DN}$ and $\Omega$ unless otherwise specified. In subsection \ref{subs:1.2} we will investigate the existence of local imbeddings. In subsection \ref{subs:1.3} we consider Fréchet spaces whose diametral approximative dimension is equal to a finite type power series space and show that this property yields local imbeddings.

\begin{defin} \label{def:1}
A continuous linear operator $T$ from $\Lambda_1(\alpha)$ into a graded Fréchet space $\{\mathcal{X} , \ ^{{\|\cdot\|}_n }\}$ is called an \emph{$(r , k)$-local imbedding} in case;
\[
\exists \  C > 0 : \quad {\|T(x)\|}_k \geq C {|x|}_r , \quad \forall \  x \in \Lambda_1 (\alpha).
\]
We will say that a Fréchet space $\mathcal{X}$ admits an \emph{r-local imbedding} from $\Lambda_1 (\alpha)$, in case there exists a continuous linear operator $T$ from $\Lambda_1 (\alpha)$ in $\mathcal{X}$ with the property;
\begin{align*}
\exists \text{ continuous semi-norm } \ ^{\|\cdot\|} \text{ of } \mathcal{X} \text{ and } C > 0 \\ 
\text{ such that } \|T(x)\| \geq C |x|_r \quad \forall \ x \in \Lambda_1 (\alpha).
\end{align*}
\end{defin}

\subsection{Linear topological invariants $\underline{DN}$, $\Omega$ and associated exponent sequences} \label{subs:1.1}

\begin{defin} \label{def:2}
A nuclear Fréchet space $\mathcal{X}$ is said to have properties $\underline{DN}$ and $\Omega$ (\cite{Vogt1}, \cite{Vogt-Wagner1}) in case there exists a fundamental system of hilbertian norms $\{\|\ \|_k\}_k$ generating the topology of $\mathcal{X}$ which satisfy
\begin{align*}
(\underline{DN}) : \qquad & \forall k \ \exists \ 0 < \lambda < 1 \text{ and } C>0 \text{ such that } \\
& \quad {\|x\|}_{k+1} \leq C \ {\|x\|}^\lambda_k \  {\|x\|}^{1-\lambda}_{k+2} \quad \forall \ x \in \mathcal{X}  \\
(\Omega) : \qquad & \forall k \ \forall p \  \exists C>0 \text{, }  j \in \mathbb{N} \text{ such that } \\
&\quad U_{k+1} \subset C r^j U_p + \frac{1}{r} U_k \quad \forall \ r>0
\end{align*}
\end{defin}

For more information about these invariants and examples of Fréchet spaces possessing these properties we refer the reader to \cite{M-V}.

Let $U$ and $V$, $V \subseteq U$ be 2 subsets of a Fréchet space $\mathcal{X}$. We will use the notation $\mathcal{E}_n (U,V)$ to denote
\[
    \mathcal{E}_n (U,V) \doteq -\ln d_n (V, U),\quad n=0,1,2
\]
where
\[
    d_n (V,U) \doteq \inf_{L_n} \  \inf \left\{ \lambda : \  \lambda > 0, \ V \subseteq \lambda U + L_n\right\}
\]
(infimum is taken over all n-dimensional subspaces $L_n$ of $\mathcal{X}$) is the $n^{th}$ Kolmogorov diameter of $V$ in $U$.

In the case of a graded Fréchet space $(\mathcal{X} , \ ^{{\|\ \|}_k})$, we simplify this notation by setting 
\[
    \mathcal{E}_n (p, k) \doteq \mathcal{E}_n (U_p, U_k), \quad \forall \  p<k.
\]

Let us now fix a nuclear Fréchet space $\mathcal{X}$, with the properties $\underline{DN}$ and $\Omega$, and choose the grading ${\left\{ {\|\ \|}_k \right\}}_{k}$, derived from these properties. In \cite{AKT2} (c.f \cite{AKT1}) we have shown that all the sequences ${\{\mathcal{E}_n(k, k+1)\}}_n$, $k = 0,1,2,\dots$ are equivalent and called this equivalence class the \emph{exponent sequence associated} to $\mathcal{X}$. In this paper we, unless stated otherwise, will use a concrete representation, $\mathcal{E}_n \doteq \mathcal{E}_n (0,1), \ n=0,1,2$, of the associate exponent sequence of $\mathcal{X}$.

We compile (\cite{Terzioglu1}) some properties of these sequences that will be used in the sequel.

\begin{prop} \label{prop:1.1}
{\em Let $(\mathcal{X} , \ ^{{\|\ \|}_k})$ be as above. An $\Omega$-type condition;
\[
    \exists \  0<\lambda<1, D>0 : \quad U_q \subset r^{\frac{1-\lambda}{\lambda}} U_p + \frac{D}{r} U_k , \quad \forall \  r>C \quad (p<q<1)
\]
implies the inequalities:
\begin{enumerate}
\item $\exists \  C>0$; $\mathcal{E}_n (q, k) \leq \lambda \mathcal{E}_n (p;k)+C$,  $\forall \ n$,
\item $\exists \  C>0$; $\mathcal{E}_n (p, k) \leq \frac{1}{1-\lambda} \mathcal{E}_n (p,q)+C$,  $\forall \ n$, 
\item $\exists \  C>0$; $\mathcal{E}_n (q, k) \leq \frac{\lambda}{1-\lambda} \mathcal{E}_n (p,q)+C$,  $\forall \ n$,
\end{enumerate}

A $\underline{DN}$-type condition:
\[
    \exists \  0<\lambda<1, D>0 : \quad {\|\cdot\|}_q \leq D {\|\cdot\|}_k^{1-\lambda} {\|\cdot\|}_p^\lambda \quad (p<q<k)
\]
implies the inequalities:
\begin{enumerate}
\item [(1')] $\exists \  C>0$; $\mathcal{E}_n (p, k) \leq \frac{1}{\lambda} \mathcal{E}_n (q;k)+C$,  $n=0,1,\dots$,
\item [(2')] $\exists \  C>0$; $\mathcal{E}_n (p, q) \leq (1-\lambda) \mathcal{E}_n (p;k)+C$,  $n=0,1,\dots$,
\item [(3')] $\exists \  C>0$; $\mathcal{E}_n (p, q) \leq \frac{\lambda}{1-\lambda} \mathcal{E}_n (q,k)+C$,  $n=0,1,\dots$,
\end{enumerate}
In the presence of both $\Omega$ and $DN$-type conditions we have
\begin{enumerate}
\item [(4)] $\exists \  C>0, \ 0<\lambda_1 <1,\ 0<\lambda_2 <1;\ \mathcal{E}_n \leq \frac{\lambda_1 \lambda_2}{(1-\lambda_1)(1-\lambda_2)} \mathcal{E}_n (p,q)+C,\ \forall \ n$
\item [(4')] $\exists \  C>0, \ 0<\lambda_1 <1,\ 0<\lambda_2 <1;\ \mathcal{E}_n (p,q) \leq \frac{\lambda_1 \lambda_2}{(1-\lambda_1)(1-\lambda_2)} \mathcal{E}_n + C,\  n=0,1,...$
\end{enumerate}
}
\end{prop}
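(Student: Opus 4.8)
The plan is to carry out every estimate at the level of the Kolmogorov diameters $d_n$ and to pass to $\mathcal{E}_n=-\ln d_n$ only at the very end. The toolkit consists of three elementary facts: the scaling rules $d_n(\mu V,U)=\mu\, d_n(V,U)$ and $d_n(V,\mu U)=\mu^{-1}d_n(V,U)$; the Minkowski subadditivity $d_{m+n}(A+B,U)\le d_m(A,U)+d_n(B,U)$, obtained by adding the two approximating subspaces; and the absorption principle, namely that $V\subseteq A\,U+\theta V+L_n$ with $\theta<1$ and $\dim L_n\le n$ forces $V\subseteq \tfrac{A}{1-\theta}U+L_n$ after summing a geometric series. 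A useful preliminary observation is that within each block only the first two inequalities are independent: chaining (1) with (2) produces (3), and chaining (1') with (2') produces (3'). So it suffices to establish (1),(2) from the $\Omega$-type inclusion and (1'),(2') from the $\underline{DN}$-type interpolation.

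For the $\Omega$-block I would argue directly. To get (2), fix an $n$-dimensional $L_n$ with $U_k\subseteq \mu' U_p+L_n$, where $\mu'$ is within a factor of $\mu:=d_n(U_k,U_p)$, feed this into the inclusion $U_q\subseteq r^{(1-\lambda)/\lambda}U_p+\tfrac{D}{r}U_k$, and collapse the two $U_p$-terms (using $\tfrac{D}{r}L_n=L_n$) to obtain $d_n(U_q,U_p)\le r^{(1-\lambda)/\lambda}+\tfrac{D\mu'}{r}$. The two summands balance at $r\asymp (\mu')^{\lambda}$ (recall $1+\tfrac{1-\lambda}{\lambda}=\tfrac1\lambda$), which gives $d_n(U_q,U_p)\le C\,(\mu')^{1-\lambda}$ and hence (2) after taking $-\ln$. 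To get (1), start instead from a subspace realizing $\delta:=d_n(U_k,U_q)$, substitute the same inclusion, and now absorb the residual term $\tfrac{\delta' D}{r}U_k$ via the absorption principle, with $r$ chosen proportional to $\delta'$ so that $\theta=\tfrac{\delta' D}{r}$ is a fixed fraction below $1$; this yields $d_n(U_k,U_p)\le C\,(\delta')^{1/\lambda}$, i.e.\ (1).

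The $\underline{DN}$-block I would deduce by duality rather than recompute. Because the local spaces are Hilbert spaces, all $s$-numbers of the canonical inclusion $\mathcal{X}_k\hookrightarrow\mathcal{X}_p$ coincide and equal those of its adjoint, so $d_n(U_k,U_p)=d_n(U_p^{\circ},U_k^{\circ})$ for the polar balls; thus $\mathcal{E}_n(p,k)$ is unchanged when one passes to the dual system and reverses the order of the indices. Under this duality the $\underline{DN}$-type inequality $\|\cdot\|_q\le D\|\cdot\|_k^{1-\lambda}\|\cdot\|_p^{\lambda}$ translates into an $\Omega$-type inclusion among $U_p^{\circ},U_q^{\circ},U_k^{\circ}$, so (1'),(2'),(3') are the images of (1),(2),(3) with $p$ and $k$ interchanged. (One may instead imitate the absorption argument after rewriting the multiplicative inequality in the additive form $\|x\|_q\le D[(1-\lambda)s\|x\|_k+\lambda s^{-(1-\lambda)/\lambda}\|x\|_p]$ by the weighted arithmetic--geometric mean inequality.) Finally, for (4) and (4') I would use the already-recorded equivalence of all consecutive exponent sequences $\{\mathcal{E}_n(j,j+1)\}_n$: comparing $\mathcal{E}_n=\mathcal{E}_n(0,1)$ with $\mathcal{E}_n(p,q)$ amounts to crossing from the interval $[p,q]$ to an adjacent unit interval, which I accomplish with one $\Omega$-step (inequality (3), factor $\tfrac{\lambda_1}{1-\lambda_1}$) followed by one $\underline{DN}$-step (inequality (3'), factor $\tfrac{\lambda_2}{1-\lambda_2}$); the two factors multiply to $\tfrac{\lambda_1\lambda_2}{(1-\lambda_1)(1-\lambda_2)}$, yielding (4), and the reverse comparison yields (4').

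The part that needs genuine care is the optimization over $r$ in the $\Omega$-block. The balancing value of $r$ tends to $0$ as $n\to\infty$ (the diameters $\mu',\delta'$ vanish), so the inclusion is needed precisely in the range $r\to 0^{+}$; matching this with the hypothesis is exactly the point where one uses that the $\Omega$ property of Definition \ref{def:2} is available for every $r>0$. Equally important is to check that each multiplicative constant produced along the way---from the geometric series, from collapsing the $U_p$-terms, and from the balancing of $r$---is independent of $n$, so that after taking logarithms all of them collapse into the single additive constant $C$ appearing in the statements.
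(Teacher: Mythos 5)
Your write-up is essentially correct, but note first that the paper itself contains no proof of Proposition \ref{prop:1.1}: the proof reads ``See \cite{Terzioglu1} (c.f.\ \cite{AKT2})'', so yours is an independent reconstruction rather than a variant of an argument in the text. What you reconstruct is the standard route of those references, and it checks out: for the $\Omega$-block, feeding a width-realizing subspace into the inclusion, balancing at $r\asymp(\mu')^{\lambda}$ (via $1+\frac{1-\lambda}{\lambda}=\frac{1}{\lambda}$) for (2), and absorbing with $\theta=D\delta'/r$ held below $1$ for (1), produce $d_n(U_q,U_p)\le C(\mu')^{1-\lambda}$ and $d_n(U_k,U_p)\le C(\delta')^{1/\lambda}$ with $n$-independent constants, as required. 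Your duality disposal of the $\underline{DN}$-block is a genuine economy over redoing these estimates: in the Hilbertian grading the Kolmogorov widths coincide with the singular values of the canonical inclusions, the adjoint preserves these, and the weighted AM--GM form $\|x\|_q\le D[(1-\lambda)s\|x\|_k+\lambda s^{-(1-\lambda)/\lambda}\|x\|_p]$ dualizes exactly to an $\Omega$-type inclusion among the polars, valid for all $s>0$, with $p$ and $k$ interchanged and $\lambda$ replaced by $1-\lambda$; this yields (1') and (2') from (1) and (2). You also correctly isolated the one genuine subtlety: as printed, the hypothesis ``$\forall\,r>C$'' with coefficient $r^{(1-\lambda)/\lambda}$ on the \emph{large} ball $U_p$ is vacuously satisfied once $r\ge1$ (since $U_q\subseteq U_p$) and is useless in the regime $r\to0^{+}$ that your balancing needs; the condition must be read either as holding for all $r>0$ (as in Definition \ref{def:2}) or with $U_p$ and $U_k$ transposed, in which case the balancing value $r\asymp(D/\mu')^{\lambda}\to\infty$ matches ``$r>C$'' and the proof is unchanged up to renaming $\lambda\leftrightarrow1-\lambda$.

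Two slips of bookkeeping, both harmless. First, chaining (1') with (2') gives $\mathcal{E}_n(p,q)\le\frac{1-\lambda}{\lambda}\mathcal{E}_n(q,k)+C$, the reciprocal of the factor printed in (3'); since these are existential statements in $\lambda$, this is only a renaming (and it flags a typo in the statement rather than in your argument). Second, your advertised composition for (4), ``one (3)-step followed by one (3')-step,'' cannot literally connect $\mathcal{E}_n(0,1)$ with $\mathcal{E}_n(p,q)$: applying (3) to bound $\mathcal{E}_n(0,1)$ would require an index below $0$. Using your own inequalities, correct chains are, e.g., (3') on the triple $(0,1,q)$ followed by (1') on $(1,p,q)$ for (4), and (1) on $(0,p,q)$ followed by (2) on $(0,1,q)$ for (4'); the resulting constants can still be written in the advertised product shape, so nothing beyond relabeling is lost.
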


\begin{proof}
See \cite{Terzioglu1} (c.f \cite{AKT2}).
\end{proof}

\subsection{Local imbeddings of power series spaces of finite type into nuclear Fréchet spaces with properties $\underline{DN}$ and $\Omega$} \label{subs:1.2}

\ 

We now turn our attention to the question of existence of local imbeddings into the graded Fréchet space $(\mathcal{X} , \ ^{{\|\ \|}_k})$ fixed in subsection \ref{subs:1.1} above. But we first consider some general considerations concerning local imbeddings.

\begin{prop} \label{prop:1.2}
{\em Let $\Lambda_1 (\alpha)$ be a nuclear power series space of finite type. The following assertions are equivalent;
\begin{enumerate}
\item [(i)] There exists an $(r,k)$-local imbedding from $\Lambda_1 (\alpha)$ into $\mathcal{X}$
\item [(ii)] There exists a sequence $\left\{ g_n \right\}_{n=0}^\infty$ in $\mathcal{X}$ that is orthogonal in $\mathcal{X}_k$ and satisfies; 
\[
    \overline{\lim}_n \frac{\ln \|g_n\|_s}{\alpha_n} < 1-r, \quad \forall \  s 
\]
\item [(iii)] There exists an isometry from the local Hilbert space $\Lambda_r [\alpha]$ into $H_k$ that induces a continuous linear operator from $\Lambda_1 (\alpha)$ into $\mathcal{X}$
\item [(iv)] There exists a closed, Hilbertian bounded disc $B$ which satisfies;
\[
    \exists \  C>0 \text{ such that } d_n (B, U_k) \geq C e^{-(1-r)\alpha_n}
\]
\end{enumerate}
}
\end{prop}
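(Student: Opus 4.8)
The plan is to prove the four statements equivalent by running the cycle $(iii)\Rightarrow(i)\Rightarrow(iv)\Rightarrow(ii)\Rightarrow(iii)$, so that the Hilbert-space reductions and the diameter estimate each get used once. Write $\pi_k:\mathcal{X}\to\mathcal{X}_k$ for the canonical map into the local Hilbert space (the space called $H_k$ in (iii)). The implication $(iii)\Rightarrow(i)$ is immediate: if the isometry $J:\Lambda_r[\alpha]\to\mathcal{X}_k$ is induced by a continuous $T$, i.e. $\pi_k\circ T=J$ on $\Lambda_1(\alpha)$, then $\|T(x)\|_k=|x|_r$ for all $x$ and $T$ is an $(r,k)$-local imbedding. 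For $(ii)\Rightarrow(iii)$ I would first normalise the orthogonal system in $\mathcal{X}_k$ — this is harmless by the $\underline{DN}$-inequalities of Proposition \ref{prop:1.1}, which keep the strict bound $\overline{\lim}_n\frac{\ln\|g_n\|_s}{\alpha_n}<1-r$ under division by $\|g_n\|_k$ — and then send the orthonormal basis $u_n=e^{-r\alpha_n}\epsilon_n$ of $\Lambda_r[\alpha]$ to $g_n$, defining $J$, with induced diagonal map $T(\epsilon_n)=e^{r\alpha_n}g_n$. Only continuity of $T$ needs checking: choosing $\delta>0$ with $\|g_n\|_s\le e^{(1-r-\delta)\alpha_n}$ eventually and $t\in(1-\delta,1)$, Cauchy–Schwarz bounds $\|T(x)\|_s$ by $|x|_t\big(\sum_n e^{2r\alpha_n}\|g_n\|_s^2e^{-2t\alpha_n}\big)^{1/2}$, and the series converges since $\Lambda_1(\alpha)$ is nuclear.

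For $(i)\Rightarrow(iv)$ I would exhibit the disc directly. As $|x|_t\le|x|_1$ for $t<1$, the set $V=\{x:|x|_1\le 1\}$ is a bounded Hilbertian disc in $\Lambda_1(\alpha)$, so $B:=\overline{T(V)}$ is one in $\mathcal{X}$. To estimate $d_n(B,U_k)$ from below I test on the $(n+1)$-dimensional space $F_n=\mathrm{span}\{T\epsilon_0,\dots,T\epsilon_n\}$: for $y=T(x)$ with $x$ supported on $\{0,\dots,n\}$ one has $\|y\|_k\ge C|x|_r$ and $p_B(y)\le|x|_1$, while $|x|_1\le e^{(1-r)\alpha_n}|x|_r$ on this subspace because $\alpha$ is increasing. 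Thus $\|y\|_k\ge Ce^{-(1-r)\alpha_n}p_B(y)$ on $F_n$, and the standard lower bound for Kolmogorov diameters gives $d_n(B,U_k)\ge Ce^{-(1-r)\alpha_n}$.

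The substance of the proposition is the return $(iv)\Rightarrow(ii)$. Realising $B$ as the unit ball of a Hilbert space $\mathcal{X}_B\hookrightarrow\mathcal{X}$ and diagonalising the inclusion $\mathcal{X}_B\to\mathcal{X}_k$, the diameters become its singular values $\sigma_n=d_n(B,U_k)\ge Ce^{-(1-r)\alpha_n}$, with singular vectors $v_n\in\mathcal{X}_B$ (hence $\|v_n\|_s\le M_s$ for every $s$) and $\pi_k v_n=\sigma_n w_n$, where $\{w_n\}$ is orthonormal in $\mathcal{X}_k$. The naive lift $v_n/\sigma_n$ of $w_n$ only obeys $\|v_n/\sigma_n\|_s\lesssim e^{(1-r)\alpha_n}$, i.e. the borderline exponent, which is precisely one increment too weak for (ii); moreover the bound in (iv) is genuinely at the boundary, so no passage to a subsequence repairs this.

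I expect this borderline gap to be the entire difficulty, and it is where $\underline{DN}$ and $\Omega$ are indispensable. The idea is to discard $v_n/\sigma_n$ and instead produce a better lift $\tilde g_n\in\mathcal{X}$ with $\pi_k\tilde g_n=w_n$ and $\|\tilde g_n\|_s\le e^{(1-r-\delta)\alpha_n}$ for some fixed $\delta>0$: one uses the $\Omega$-inequality to solve $\pi_k(\cdot)=w_n$ with polynomially controlled growth in the higher norms and the $\underline{DN}$-inequality to interpolate the tiny $\mathcal{X}_k$-norm of $w_n$ against a dominating norm, then balances the two exponents against the singular-value decay so that the outcome falls strictly below $1-r$; the precise exchange rates are exactly the interpolation inequalities of Proposition \ref{prop:1.1}, read through the associated exponent sequence $\mathcal{E}_n$. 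Once the $\tilde g_n$ are in hand they form an orthonormal system in $\mathcal{X}_k$ verifying (ii), and the cycle closes. Converting the boundary diameter estimate of (iv) into the strict growth condition of (ii), by correct bookkeeping of the $\underline{DN}/\Omega$ exponents, is the step I expect to demand the real work.
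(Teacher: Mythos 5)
Your cycle is well chosen and three of the four arrows are essentially complete. The implication (iii)$\Rightarrow$(i) is immediate, as you say. Your (i)$\Rightarrow$(iv) is a correct variant of the paper's argument: the paper computes the diameters exactly inside the infinite-dimensional image $T(\Lambda_1(\alpha))$ (citing Lemma 6.2.2 of Dubinsky), whereas you test on the $(n+1)$-dimensional sections $F_n$ and invoke the ball-section theorem $d_n(U_k\cap F,U_k)=1$ for $\dim F=n+1$; both work, and your version is if anything more elementary. Your (ii)$\Rightarrow$(iii) is the paper's diagonal operator, and the Cauchy--Schwarz continuity estimate using nuclearity of $\Lambda_1(\alpha)$ is fine. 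One caveat: your claim that normalising the $g_n$ in $\mathcal{X}_k$ is ``harmless by the $\underline{DN}$-inequalities'' is unjustified --- nothing in (ii) bounds $\|g_n\|_k$ from below, and if $\|g_n\|_k\sim e^{-\beta\alpha_n}$ the rescaling destroys the strict bound (take $\mathcal{X}=\Lambda_1(\alpha)$, $g_n=e^{-\beta\alpha_n}\epsilon_n$ with $\beta$ large to see that (ii) read with merely orthogonal vectors is strictly weaker than (i), by the Remark following the Proposition). The intended reading, and what the paper's Gram--Schmidt in (i)$\Rightarrow$(ii) actually produces, is $\|g_n\|_k=1$; with that reading your step is correct without any normalisation.

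The genuine gap is that the pivotal implication (iv)$\Rightarrow$(ii) is never proved: you set up the right objects (the singular decomposition of the compact inclusion $\mathcal{X}_B\hookrightarrow\mathcal{X}_k$, with $\sigma_n=d_n(B,U_k)$, $\|v_n\|_B=1$, $\pi_k v_n=\sigma_n w_n$), correctly observe that the crude estimate on $v_n/\sigma_n$ gives only the borderline exponent $1-r$, and then defer the repair to an ``expected'' $\Omega$-based lifting. That proposed repair cannot work as formulated: in the grading coming from $\underline{DN}$ the $\|\cdot\|_k$ are norms, so the canonical map $\mathcal{X}\to\mathcal{X}_k$ is injective and $w_n$ has exactly one preimage, namely $v_n/\sigma_n$; there is no ``better lift'' $\tilde g_n$ with $\pi_k\tilde g_n=w_n$ to be constructed, and $\Omega$ offers no freedom here. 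The correct fix --- and this is exactly the paper's (iv)$\Rightarrow$(i) --- does not change the vectors but improves the estimate on the same ones, interpolating against the gauge norm of $B$ rather than against a graded norm: for $g_n:=v_n/\sigma_n$ one has $\|g_n\|_k=1$ and $\|g_n\|_B=\sigma_n^{-1}\leq C^{-1}e^{(1-r)\alpha_n}$, and $\underline{DN}$ together with boundedness of $B$ (which gives $\|x\|_q\leq C_q\|x\|_B$ on $H_B$) yields, for each $s$, some $0<\lambda<1$ with
\[
\|g_n\|_s\;\leq\;C\,\|g_n\|_k^{1-\lambda}\,\|g_n\|_B^{\lambda}\;\leq\;C'\,e^{\lambda(1-r)\alpha_n},
\]
so that $\overline{\lim}_n \ln\|g_n\|_s/\alpha_n\leq\lambda(1-r)<1-r$ strictly. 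This one-line interpolation closes the borderline gap you identified, gives (ii) directly (the $g_n$ are orthonormal in $\mathcal{X}_k$), and shows in particular that $\Omega$ plays no role in this implication. Without this step your argument is a correct plan with its central implication missing, and the repair you sketch is structurally blocked.
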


\begin{proof}
\ 
\begin{enumerate}

\item [(i)$\Rightarrow$(ii):] Fix an $(r,k)$-local imbedding $T$ with $\|T(x)\|_k \geq C_1 |x|_r$, $\forall \ x \in \mathcal{X}$. Consider 
\[
    f_n \doteq \frac{\epsilon_n}{e^{r\alpha_n}},\quad n=0,1,2,\dots
\]
and apply the Gram-Schmidt orthogonalization procedure to the finitely linearly independent sequence $\left\{ T(f_n)\right\}_n$ in $\mathcal{X}_k$ to get a sequence,
\[
    g_n \doteq \sum_{i=0}^n c_i^n T(f_i), \quad n = 1,2,\dots 
\]
in $\mathcal{X}$ that is orthogonal in $\mathcal{X}_k$. We estimate;
\[
    1 = \|g_n\|_{k} = \left\|T\left(\sum_{i=0}^{n} c_i^n g_i \right)\right\|_{k} \geq C_1 \left| \sum_{i=0}^{n} e_i^n \frac{\epsilon_i}{e^{r\alpha_i}} \right|_{r} = C_1 \left( \sum |c_i^n|^2\right)^{\frac{1}{2}}.
\]
From continuity of $T$ we get for every $s$, a $r < \sigma(s) < 1$ and $C>0$ such that, 
\begin{align*}
    \|g_n\|_s & = \left\| T\left( \sum_{i=0}^{n} c_i^n f_i \right) \right\|_s \\ & \leq C \left| \sum_{i=0}^{n} c_i^n f_i \right|_{\sigma(s)} \\ & \leq C e^{(\sigma(s)-r)_{\alpha n}}\left( \sum_{i=0}^n |c_i|^2 \right)^{\frac{1}{2}} \leq \frac{C}{C_1} e^{(\sigma(s)-r)_{\alpha n}}.
\end{align*}
Hence
\[
    \overline{\lim}_n \frac{\ln \|g_n\|_s}{\alpha_n} \leq \sigma(s)-r < 1-r.
\]

\item [(ii)$\Rightarrow$(iii):] Choose $\{g_n\}_n$ as in (ii) and set 
\[
    T\left(\sum_{i=0}^{\infty} c_i \epsilon_i \right) \doteq \sum_{i=0}^{\infty} e^{r\alpha_i} c_i g_i.
\]
This operator is plainly a continuous linear operator from $\Lambda_1(\alpha)$ into $\mathcal{X}$ that extends to an isometry from $\Lambda_r[\alpha_{n}]$ into $\mathcal{X}_k$.

\item [(iii)$\Rightarrow$(iv):] Choose an operator $T$ which has the properties given in (iii) and set;
\[
    g_n \doteq T \left( \frac{\epsilon_n}{e^{r\alpha_n}} \right), \quad n = 0,1,\dots.
\]
Consider the set
\[
    B \doteq \left\{ x \in \mathcal{X}; \quad \sum_{n=0}^\infty |\langle x, g_n \rangle_k|^2 e^{2(1-r)\alpha_n} \leq 1 \right\},
\]
where $\langle , \rangle_k$ is the inner product corresponding to $\|\ \|_k$.

A direct computation will show that $B$ is the image under $T$ of the compact set $B_0=\{(\xi_n); \quad \sum |\xi_n|^2 e^{2\alpha_n} \leq 1\}$ in $\Lambda_1(\alpha)$. So $B$ is a compact disc that is Hilbertian and it follows that (see Lemma 6.2.2 of \cite{D})
\[
    d_n (B,U_k) \geq d_n(B \cap T(\mathcal{X}), T(\mathcal{X}) \cap U_k) = e^{(r-1)\alpha_n}
\]

\item [(iv)$\Rightarrow$(i):] Let $H_B$ denote the Hilbert space generated by $B$ in $\mathcal{X}$. Since the inclusion $H_B \hookrightarrow \mathcal{X}_k$ is a compact operator we can choose a sequence $\{f_n\}_{n=0}^\infty$ of orthogonal vectors in $H_B$ that are orthogonal in the Hilbert space $\mathcal{X}_k$ and $\|f_n\|_B = \left( d_n(B, U_k) \right)^{-1}, \quad n=0,1,2,\dots$. Set
\[
    T\big((\xi_n)_n\big) \doteq \sum_{n=0}^\infty \xi_n e^{r\alpha_n} f_n, \quad \forall \ (\xi_n) \in \Lambda_1(\alpha).
\]
We fix an $s>k$ and choose, in view of the condition $\underline{DN}$, a $0<\lambda<1$ and $C>0$ such that $\|x\|_s \leq C \|x\|_k^{1-\lambda} \|x\|_B^\lambda, \quad  \forall\ x \in H_B$.

Choose $\lambda^+$ so that, $0<r+\lambda(1-\lambda)<\lambda^+<1$. There exists positive constants $C_1, C_2$ such that;
\begin{align*}
\left\| T\big((\xi_n)\big)_n \right\|_s  & \leq \sum_n |\xi_n| e^{r\alpha_n} \|f_n\|_s \\ & \leq C_1 \sum_n |\xi_n| e^{r\alpha_n} e^{\lambda(1-r)\alpha_n}\\ & \leq C_2\left( \sum_n |\xi_n|^2 e^{2\lambda^+ \alpha_n} \right)^{\frac{1}{2}} = C_2 \left| (\xi_n) \right|_{\lambda^+}.
\end{align*}
It follows that $T$ defines a continuous linear operator from $\Lambda_1(\alpha)$ into $\mathcal{X}$. Moreover 
\[
    \|T((\xi_n))\|_k = \left( \sum_n |\xi_n|^2 e^{2r\alpha_n} \right)^{\frac{1}{2}} = |(\xi_n)|_r.
\]
Hence $T$ is a $(r,k)$-local imbedding.
\end{enumerate}
\end{proof}

\begin{xrem} \label{remark:1}
Contrary to the infinite case (\cite{AKT2}), the existence of an $(r,k)$-local imbedding from a finite type power series space into a graded Fréchet space need not imply the existence of an $(r^+, k)$-local imbedding for $r^+>r$. In fact it is not difficult to see that for a nuclear finite type power series space $\mathcal{X}=\Lambda_1(\alpha)$, there exists a $(r_1,r_2)$-local imbedding from $\Lambda_1(\alpha)$ into $\mathcal{X}$ if and only if $r_1 \leq r_2$. Indeed if there is a bounded set $B$ in $\mathcal{X}$ such that $d_n(B,U_{r_2}) \geq C e^{(r_1-1)\alpha_n}$ for some $C>0$, then for any $r_2<s<1$, there exists a $C_1>0$ such that $B \subseteq C_1 U_s$ which implies that 
\[
    d_n(B,U_r) \leq C_1 d_n(U_s,U_r) = C e^{(r_2-s)\alpha_n}
\]
for every $n$. Hence $r_1-1 \leq r_2-1$ or $r_1 \leq r_2$.

The condition in (iv) of the above Proposition can also be expressed by using the unit balls of the grading:
\end{xrem}

\begin{prop} \label{prop:1.3}
{\em 
There exists a $(r,p)$-local imbedding from $\Lambda_1(\alpha)$ into $\mathcal{X}$ if and only if
\[
    \sup_{k>p} \ \overline{\lim}_n \frac{\mathcal{E}_n(p,k)}{\alpha_n} \leq 1-r.
\]
}
\end{prop}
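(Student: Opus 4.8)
The plan is to prove both implications by translating the existence of an $(r,p)$-local imbedding into Kolmogorov-diameter language through Proposition \ref{prop:1.2}, using its characterization (iv). Before starting I would record one simplification: since $U_{k+1}\subseteq U_k$, monotonicity of the Kolmogorov diameters gives $d_n(U_{k+1},U_p)\le d_n(U_k,U_p)$, so $\mathcal{E}_n(p,k)=-\ln d_n(U_k,U_p)$ is nondecreasing in $k$ and the supremum over $k>p$ is in fact the limit as $k\to\infty$. Thus the asserted inequality says precisely that $\overline{\lim}_n \mathcal{E}_n(p,k)/\alpha_n\le 1-r$ for \emph{every} $k>p$, and I would work with it in this per-level form.

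For the direction ``$(r,p)$-local imbedding $\Rightarrow$ inequality'' I would invoke Proposition \ref{prop:1.2}(iv): fix a closed Hilbertian bounded disc $B$ with $d_n(B,U_p)\ge C e^{-(1-r)\alpha_n}$. Since $B$ is bounded it is absorbed by every ball, so for each $k>p$ there is a $C_k>0$ with $B\subseteq C_kU_k$. Monotonicity and homogeneity of the diameters then give
\[
 C e^{-(1-r)\alpha_n}\le d_n(B,U_p)\le C_k\, d_n(U_k,U_p)=C_k\, e^{-\mathcal{E}_n(p,k)},
\]
whence $\mathcal{E}_n(p,k)\le (1-r)\alpha_n+\ln(C_k/C)$. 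Dividing by $\alpha_n$ and letting $n\to\infty$ (recall $\alpha_n\to\infty$) yields $\overline{\lim}_n \mathcal{E}_n(p,k)/\alpha_n\le 1-r$ for every $k>p$, which is the asserted bound. This half uses only boundedness and is clean.

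For the converse I would aim directly at the bounded disc required by Proposition \ref{prop:1.2}(iv). The data available are the per-level lower bounds $d_n(U_k,U_p)\ge e^{-(1-r+\varepsilon)\alpha_n}$, valid eventually in $n$ for each $k$ and each $\varepsilon>0$; diagonalizing each pair $(\mathcal{X}_p,\mathcal{X}_k)$ produces, for fixed $k$, a system orthogonal in $\mathcal{X}_p$ with its $\|\cdot\|_k$-norms controlled. The task is to assemble these into one Hilbertian bounded disc $B$ whose diameters in $U_p$ still decay like $e^{-(1-r)\alpha_n}$. This is where $\underline{DN}$ and $\Omega$ enter: property $\Omega$ furnishes a bounded set playing the role of a ``top-level'' ball, whose diameters $d_n(B,U_p)$ realise, up to constants, the infimum $\inf_k C_k\,d_n(U_k,U_p)$, while $\underline{DN}$ supplies the logarithmic convexity of the norms needed to interpolate the intermediate levels and to guarantee that the resulting disc is genuinely bounded in $\mathcal{X}$ (the same interpolation already exploited in the proof of (iv)$\Rightarrow$(i)). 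Concretely I would run a diagonal/block construction: choose levels $p<k_1<k_2<\cdots$ and successive blocks of indices $n$ on which the level-$k_j$ singular vectors are used, with tolerances $\varepsilon_j\to 0$, patching the suitably weighted vectors into a single $\mathcal{X}_p$-orthogonal sequence. Using the inequalities of Proposition \ref{prop:1.1} to choose the weights so that the tails $\sum_n\|\cdot\|_s^2$ converge for every $s$ while the $U_p$-diameters stay above $Ce^{-(1-r)\alpha_n}$ then produces the disc, and Proposition \ref{prop:1.2}(iv)$\Rightarrow$(i) finishes the proof.

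The main obstacle is exactly this reconciliation step in the converse: the hypothesis controls each $\overline{\lim}_n \mathcal{E}_n(p,k)/\alpha_n$ separately, whereas one must produce a single bounded disc, which amounts to interchanging the supremum over $k$ with the upper limit over $n$ (equivalently, realising the per-level asymptotic bounds by one bounded set with the correct uniform rate $e^{-(1-r)\alpha_n}$). This interchange is not formal and is made possible only by the structural inequalities coming from $\underline{DN}$ and $\Omega$; keeping the disc bounded while preserving the exact diameter rate is the delicate point of the whole argument.
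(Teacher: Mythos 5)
Your forward implication is correct and is exactly the paper's: Proposition \ref{prop:1.2}(iv) gives a bounded Hilbertian disc $B$ with $d_n(B,U_p)\ge Ce^{-(1-r)\alpha_n}$, absorption $B\subseteq C_kU_k$ gives $\mathcal{E}_n(p,k)\le(1-r)\alpha_n+O(1)$ for each $k>p$, and the bound follows. The converse, however, is a program rather than a proof, and the step you yourself flag as ``the delicate point'' --- passing from the per-level bounds $\overline{\lim}_n\mathcal{E}_n(p,k)/\alpha_n\le 1-r$ to one bounded disc with the exact rate $e^{-(1-r)\alpha_n}$ --- is left unresolved, and your proposed route to it would not work. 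Keeping tolerances $\varepsilon_j>0$, on the $j$-th block you only have $d_n(U_{k_j},U_p)\ge e^{-(1-r+\varepsilon_j)\alpha_n}$, and the loss factor $e^{-\varepsilon_j\alpha_n}$ degrades without bound along the block (whose endpoints themselves depend on the $\varepsilon_j$'s, so coupling $\varepsilon_j$ to the block length is circular). Indeed the desired conclusion is genuinely false from the $\varepsilon$-data alone: in a graded space where $\mathcal{E}_n(p,k)-(1-r)\alpha_n\to+\infty$ for every $k$ (compatible with the non-strict hypothesis in general), every bounded $B$ satisfies $d_n(B,U_p)\le C_kd_n(U_k,U_p)=o\bigl(e^{-(1-r)\alpha_n}\bigr)$, so no disc with the required rate can exist. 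The paper's missing idea is to remove the $\varepsilon$ \emph{before} any construction: by Proposition \ref{prop:1.1} there are $0<\rho<1$ and $C>0$ with $\mathcal{E}_n(p,k)\le\rho\,\mathcal{E}_n(p,k+1)+C$, so the hypothesis applied at level $k+1$ yields the \emph{strict} bound $\overline{\lim}_n\mathcal{E}_n(p,k)/\alpha_n\le\rho(1-r)<1-r$ at level $k$, hence $d_n(U_k,U_p)>e^{-(1-r)\alpha_n}$ for all $n\ge N(k)$ with no tolerance at all. This is where $\underline{DN}$ and $\Omega$ actually enter --- not through interpolation of weights as you suggest.

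Your assembly step has a second gap: you propose to patch together singular vectors of the couples $(\mathcal{X}_{k_j},\mathcal{X}_p)$ and to keep the resulting disc bounded via $\underline{DN}$-interpolation, but $\underline{DN}$ bounds an intermediate norm by a lower and a \emph{higher} one ($\|x\|_{k+1}\le C\|x\|_k^{\lambda}\|x\|_{k+2}^{1-\lambda}$); it cannot control $\|\cdot\|_s$ for $s>k_j$ of vectors known only at level $k_j$, so boundedness in every seminorm does not follow. The paper sidesteps this entirely: since $U_{p+s}$ is precompact in $\mathcal{X}_p$, its diameters are approximated by those of \emph{finite} subsets, so one picks finite sets $Z^s\subseteq U_{p+s}$ with $d_n(Z^s,U_p)\ge\frac{1}{2}e^{-(1-r)\alpha_n}$ for $N(p+s)\le n<N(p+s+1)$; the union $\bigcup_s Z^s$ is bounded for the trivial reason that $Z^{\bar s}\subseteq U_{p+s}$ for all $\bar s\ge s$ while the remaining pieces are finite, and Lemma 1.2 of \cite{V2} encloses it in a closed Hilbertian disc, after which Proposition \ref{prop:1.2}(iv)$\Rightarrow$(i) concludes. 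So your outline has the right target and the right block shape, but both load-bearing steps --- the strictness upgrade and the boundedness of the assembled disc --- are absent, and the substitutes you sketch for them fail.
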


\begin{proof}
\ 
\begin{enumerate}
\item [($\Leftarrow$): ] Let $\rho$ and $r$ satisfy the given inequality. For a given $k>p$ in view of Proposition \ref{prop:1.2} (iii) we have;
\[
    \exists \ 0< \rho<1; \quad \overline{\lim}_n \frac{\mathcal{E}_n(p,k)}{\alpha_n} \leq \rho \  \overline{\lim}_n \frac{\mathcal{E}_n(p,k+1)}{\alpha_n} < 1-r.
\]
So for each $k>p$ we can find a $N(k) \in \mathbb{N}$, strictly increasing with respect to $k$, such that
\[
    -\ln d_n(U_k,U_p) < (1-r)\alpha_n.
\]
Let $\delta_1 = 1/2 e^{-(1-r)\alpha_{N(p+1)}}$. Since $U_{p+1}$ is precompact in $\mathcal{X}_p$ there exists a finite set $Z_1^1 \subseteq U_{p+1}$ such that;
\[
    d_n(U_{p+1}, U_{p}) \leq d_n(Z_1^1,U_{p})+\delta_1 \quad \forall \  n
\]
Hence
\[
    d_{N(p+1)}(Z_1^1,U_p) \geq e^{-(1-r)\alpha_{n_{N(p+1)}}} - \delta_1 = 1/2 e^{-(1-r)\alpha_{n_{N(p+1)}}}
\]
For each $n$, with $N(p+1) \leq n < N(p+2)$, we obtain, as above, a finite set $Z^1_n \subseteq U_{p+1}$ satisfying 
\[
    d_n(Z_n^1, U_p) \geq \frac{e^{-(1-r)\alpha_n}}{2},
\]
and set $Z^1 \doteq \bigcup_{N(p+1)\leq n < N(p+2)} Z_n^1$.

Continuing in this fashion we get finite sets $Z^s, \ s=1,2,\dots$ with $Z^s \subseteq U_{p+s}$ and $d_n(Z^s,U_p)\geq (e^{-(1-r)\alpha_n})/2$ for $N(p+s)\leq n<N(p+s+1)$. Note that $\bigcup_{s=1}^\infty Z^s$ is a bounded set since $\forall \ k=p+s, Z^{\bar{s}} \subseteq U_k, \bar{s} > s$. We can find a closed Hilbertian disc $B$ in $\mathcal{X}$, containing $\bigcup_{s=1}^\infty Z^s$ (see for example in Lemma 1.2 of \cite{V2}). For this disc and $n>N(p+1)$(say $N(p+s)\leq n<N(p+s+1)$) we have 
\[
    d_n(B,U_p)\geq d_n(Z^s,U_p)\geq \frac{e^{-(1-r)\alpha_n}}{2}
\]
So in view of Proposition \ref{prop:1.2} (iv) there is a (r,p)-local imbedding from $\Lambda_1(\alpha)$ into $\mathcal{X}$.

\item [($\Rightarrow$): ] This implication follows immediately from Proposition \ref{prop:1.2} (iv). Indeed in view of Proposition \ref{prop:1.2} (iv) there exists a bounded set $B$ and $C>0$ with $d_n(B,U_p) \geq C e^{-(1-r)\alpha_n},\ n=1,2,\dots$.
Hence $\forall\ k \ \exists \ C_i>0, \ i=1,2$;
\begin{align*}
    \mathcal{E}_n(p,k) & \leq -\ln d_n(B,U_p)+C_1 \\
                       & \leq (1-r)\alpha_n + C_2.
\end{align*}
So;
\[
    \overline{\lim}_n\  \frac{\mathcal{E}_n(p,k)}{\alpha_n} \leq 1-r \quad \forall \ k=p+1,\dots
\]
\end{enumerate}
\end{proof}

\begin{xrem} \label{remark:2}
Although the structural assumption $\underline{DN}$ on $\mathcal{X}$ is put into use in the Proposition above, the implication $(\Rightarrow)$ can be proved without any structural assumptions on $\mathcal{X}$. One can use the argument given in Proposition \ref{prop:1.1} of \cite{AKT1} to give a proof by just using the definition of local imbeddings.
\end{xrem}

\subsection{Approximate diametral dimension and local imbeddings}  \label{subs:1.3}

\ 

In this subsection we investigate the relationship between approximate diametral dimension of a nuclear Fréchet space $\mathcal{X}$ with properties $\underline{DN}$ and $\Omega$ and the existence of local imbeddings from $\Lambda_1(\mathcal{E}_n)$, the finite type power series space corresponding to the associated exponent sequence of $\mathcal{X}$. Since we wish to use the information obtained in the previous subsections, we are forced to put \emph{an additional assumption} on $\{\mathcal{E}_n\}_n$, namely we will assume that, $\lim_n \ln (n)/\mathcal{E}_n = 0$. This assumption guaranties the nuclearity of $\Lambda_1(\mathcal{E}_n)$ (\cite{D}), which will be needed in the sequel and also for the validity of the results of previous subsections.

\begin{defin} \label{def:3}
\cite{BPR}. \emph{The Approximate Diametral Dimension} of a Fréchet space $Y$, $\delta(Y)$, is defined as;
\begin{align*}
    \delta(Y) & \doteq \bigcup_{U \in \text{ zero neighborhoods of } Y} \  \bigcup_{B \in \text{ bounded subsets of } Y} \left\{ (t_n) : \lim_n \frac{t_n}{d_n(B,U)}=0 \right\} \\
    & = \{ (t_n)_n : \exists \text{ a neighborhood of zero } U \text{ and a bounded set } B \text{ of } Y \\
    & \text{ such that } \lim_n \frac{t_n}{d_n(B,U)}=0 \}.
\end{align*}
Approximate diametral dimension also admits a representation solely in terms of basis for the zero neighborhoods, $\{U_n\}_n$ of the Fréchet space $Y$ as; $\delta(Y)=\{(t_n):\  \exists \  s \ \forall \  k \geq s ; \  t_n / d_n(U_k, U_s) \rightarrow 0\}$ (See \cite{BPR}).

Now let us fix, for the rest of this subsection, a graded nuclear Fréchet space $\left\{ \mathcal{X}, \ ^{\|\ \|_k} \right\}$ with the properties $\underline{DN}$, $\Omega$, whose associated exponent sequence $(\mathcal{E}_n)$ satisfies $\left(\ln (n)/\mathcal{E}_n  \right)\rightarrow_n 0$, the grading $\{\|\ \|_k\}_k$ that is coming from the definition \ref{def:2}. Given $p<p+1<q$, in view of Proposition \ref{prop:1.1} (1), we have the inequality $d_n(U_q, U_{p+1})^{\frac{1}{d}} \leq C d_n(U_q, U_{p+1})$ for some $C>0$ and $0<\lambda<1$. So if a sequence $(t_n)_n$ satisfies 
\[
    \frac{|t_n|}{d_n(U_q,U_p)} \leq C \quad \forall n, \text{ for some } C 
\]
then
\[
    \frac{|t_n|}{d_n(U_q,U_{p+1})} \rightarrow 0
\]
Hence under our assumptions, we have 
\[
    \delta(\mathcal{X}) = \{ (t_n)_n: \quad \exists \ p; \forall \ q>p \ \sup_n |t_n|e^{\mathcal{E}_n(p,q)} < \infty\}.
\]
It is not difficult to show, by direct computation, that (\cite{BPR})
\[
     \delta(\Lambda_1(\alpha)) = \{(t_n)_n: \quad \overline{\lim}_n \frac{\ln |t_n|}{\alpha_n} < 0\}.
\]
\end{defin}

\begin{prop} \label{prop:1.4}
{\em 
    \[
    \delta(\mathcal{X}) \supseteq \delta (\Lambda_1(\alpha)) \Leftrightarrow \inf_p \sup_{q>p} \overline{\lim}_n \frac{\mathcal{E}_n(p,q)}{\alpha_n} = 0 
    \]
}
\end{prop}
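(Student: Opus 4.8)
The plan is to argue entirely from the two explicit descriptions of approximate diametral dimension recorded immediately before the statement, namely
\[
\delta(\mathcal{X}) = \{(t_n)_n : \exists\, p,\ \forall\, q>p,\ \sup_n |t_n|\, e^{\mathcal{E}_n(p,q)} < \infty\}
\]
and
\[
\delta(\Lambda_1(\alpha)) = \{(t_n)_n : \overline{\lim}_n \tfrac{\ln|t_n|}{\alpha_n} < 0\}.
\]
Everything then reduces to elementary manipulation of $\overline{\lim}$'s, keeping in mind two standing facts: $\alpha_n \to \infty$ (from nuclearity of $\Lambda_1(\alpha)$, i.e. $\sup_n \ln n/\alpha_n < \infty$), and $\mathcal{E}_n(p,q) \geq 0$, since $q>p$ gives $U_q \subseteq U_p$ and hence $d_n(U_q,U_p)\le 1$. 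In particular the left quantity in the equivalence is always $\geq 0$, so the stated condition is really the dichotomy ``the infimum is $0$'' versus ``the infimum is positive (possibly $+\infty$)''.

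For the implication ($\Leftarrow$) I take $(t_n) \in \delta(\Lambda_1(\alpha))$, say $\overline{\lim}_n \ln|t_n|/\alpha_n = -\beta$ with $\beta > 0$, and fix any $\epsilon \in (0,\beta)$. The vanishing of the infimum produces a single $p$ with $\sup_{q>p}\overline{\lim}_n \mathcal{E}_n(p,q)/\alpha_n < \epsilon$, so that for \emph{every} $q > p$ at once,
\[
\overline{\lim}_n \frac{\ln|t_n| + \mathcal{E}_n(p,q)}{\alpha_n} \le \overline{\lim}_n \frac{\ln|t_n|}{\alpha_n} + \overline{\lim}_n \frac{\mathcal{E}_n(p,q)}{\alpha_n} < -\beta + \epsilon < 0 .
\]
Because $\alpha_n \to \infty$, a strictly negative value of $\overline{\lim}_n a_n/\alpha_n$ forces $a_n \to -\infty$; applying this to $a_n = \ln\!\big(|t_n| e^{\mathcal{E}_n(p,q)}\big)$ gives $\sup_n |t_n| e^{\mathcal{E}_n(p,q)} < \infty$ for each $q > p$. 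Hence $(t_n) \in \delta(\mathcal{X})$, so $\delta(\mathcal{X}) \supseteq \delta(\Lambda_1(\alpha))$.

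For ($\Rightarrow$) I argue contrapositively. Suppose the infimum equals some $c > 0$ (if $c = +\infty$ the same argument applies with any fixed $\gamma>0$). Choose $\gamma$ with $0 < \gamma < c$ and take the test sequence $t_n \doteq e^{-\gamma \alpha_n}$, which lies in $\delta(\Lambda_1(\alpha))$ since $\ln|t_n|/\alpha_n \equiv -\gamma < 0$. I claim it escapes $\delta(\mathcal{X})$. Indeed, for every $p$ we have $\sup_{q>p}\overline{\lim}_n \mathcal{E}_n(p,q)/\alpha_n \geq c > \gamma$, so by the definition of supremum there is a $q > p$ with $\overline{\lim}_n \mathcal{E}_n(p,q)/\alpha_n > \gamma$; along a subsequence realizing this $\overline{\lim}$, the quantity $\mathcal{E}_n(p,q) - \gamma\alpha_n$ eventually exceeds a fixed positive multiple of $\alpha_n$ and hence tends to $+\infty$, whence $\sup_n |t_n| e^{\mathcal{E}_n(p,q)} = \infty$. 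Thus for every $p$ some $q > p$ violates the boundedness requirement, so $(t_n) \notin \delta(\mathcal{X})$, contradicting the inclusion.

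I do not expect a genuinely hard step here: the analytic content is already absorbed into the two characterizations of $\delta$ quoted above, and what remains is bookkeeping with $\overline{\lim}$'s. The one place that needs care is the interplay of the quantifiers $\exists p\,\forall q$ in $\delta(\mathcal{X})$ against $\inf_p \sup_{q>p}$ in the hypothesis: in ($\Leftarrow$) one must verify that the single $p$ obtained works uniformly for all $q>p$ (which is exactly what the inner $\sup_{q>p}$ guarantees), and symmetrically in ($\Rightarrow$) that a ``bad'' $q$ can be exhibited for each $p$. Reading the quantifiers literally settles both.
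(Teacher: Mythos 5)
Your proof is correct, and your ($\Leftarrow$) direction is essentially the paper's own: both pick, from the vanishing infimum, a single $p$ that works uniformly in $q>p$ and then check boundedness of $|t_n|e^{\mathcal{E}_n(p,q)}$ directly (the paper phrases it with $|t_n|\le Cr^{\alpha_n}$, you with $\overline{\lim}_n \ln|t_n|/\alpha_n=-\beta<0$; same content). Your ($\Rightarrow$) direction, however, takes a genuinely different and more elementary route. The paper makes $\delta(\mathcal{X})=\bigcup_p B(p)$ into an inductive limit of the Fréchet spaces $B(p)=\bigcap_{q>p}B(p,q)$, observes that the inclusion of the Banach space $S_r=\{(t_n):\ \sup_n |t_n|r^{-\alpha_n}<\infty\}$ into this LF-space has sequentially closed graph, and invokes Grothendieck's factorization theorem to obtain one $p(r)$ with a norm estimate valid uniformly over all of $S_r$; testing that estimate on coordinate sequences yields $\overline{\lim}_n \mathcal{E}_n(p(r),q)/\alpha_n \le -\ln r$ for every $q>p(r)$, and $r\uparrow 1$ finishes. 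You bypass the factorization machinery entirely: membership of the single extremal sequence $(e^{-\gamma\alpha_n})$ in $\delta(\mathcal{X})$ already furnishes, by the quantifier structure $\exists\,p\ \forall\,q>p$ in the sequence-space description of $\delta(\mathcal{X})$, one $p$ with $\mathcal{E}_n(p,q)\le \gamma\alpha_n+C_q$ for all $q>p$, hence $\inf_p\sup_{q>p}\overline{\lim}_n \mathcal{E}_n(p,q)/\alpha_n\le\gamma$ — your contrapositive phrasing is just this read backwards, and it is watertight given $\alpha_n\to\infty$ and $\mathcal{E}_n(p,q)\ge 0$, both of which you rightly flag. In effect you show that the uniformity over the ball of $S_r$ supplied by the closed-graph argument is not needed for this proposition, because one well-chosen test sequence witnesses the same bound; what the paper's heavier approach buys is the stronger uniform factorization $S_r\hookrightarrow B(p(r))$ and methodological continuity with the LF-space techniques reused later (in the proof of Theorem \ref{theorem:2}), while yours buys economy, resting only on the two descriptions of $\delta$ quoted before the statement. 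One cosmetic slip: $\sup_n \ln n/\alpha_n<\infty$ is the paper's standing condition on exponent sequences rather than nuclearity of $\Lambda_1(\alpha)$ (which would be $\ln n/\alpha_n\to 0$), but this is harmless since all you use is $\alpha_n\to\infty$.
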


\begin{proof}
\ 
\begin{enumerate}
\item [($\Rightarrow$): ] For a fixed pair $p<q$ let \[
    B(p,q) \doteq \{ (t_n) : \quad \sup_n |t_n| e^{\mathcal{E}_n(p,q)} < \infty \}.
\]
This linear space becomes a Banach space under $|(t_n)|_{pq} \doteq \sup_n |t_n| e^{\mathcal{E}_n(p,q)}$. Since $B(p,q^+)\subseteq B(p,q)$ for $p<q<q^+$, the space $B(p) \doteq \bigcap_{q>p} B(p,q)$ with the fundamental generating norms $\left\{|\cdot|_{pq}\right\}$ becomes a Fréchet space. Moreover by means of continuous inclusions $B(\bar{p}) \hookrightarrow B(p), \bar{p}<p$, we can put the inductive limit topology on $\delta(\mathcal{X}) = \bigcup_n B(p)$ and thus view $\delta(\mathcal{X})$ as a locally convex space which is an inductive limit of Fréchet spaces. Let us fix a $r, \  0<r<1$ and define
\[
    S_r \doteq (t_n): \quad \|(t)\| \doteq \sup_n \frac{|t_n|}{r^{\alpha_n}} < \infty.
\]
Clearly $S_r$ is a Banach space with respect to the norm $\|\cdot\|$, and $S_r \subset \delta(\Lambda_1(\alpha))$. By our assumption, $S_r \subset \delta(\mathcal{X})$ and since projections onto coordinates are continuous with respect to the inductive limit topology on $\delta(\mathcal{X})$, the inclusion $S_r \hookrightarrow \delta(\mathcal{X})$ is a sequentially closed linear operator. In view of Grothendieck Factorization theorem (\cite[p.~225]{Kothe1}), there is a $p(r)$ such that for every $q>p(r)$
\[
    \exists \ C>0 : \quad \sup_n |t_n| e^{\mathcal{E}_n(p(r),q)} \leq C \sup_n \frac{|t_n|}{r^{\alpha_n}}
\]
for any $(t_n) \in S_r$. In particular for every $q>p(r)$ there exists a $C>0: \  \mathcal{E}_n(p(r),q)\leq \ln C - \alpha_n \ln r$, which in turn implies that 
\[
    \inf_p \sup_{q>p} \overline{\lim}_n \frac{\mathcal{E}_n(p,q)}{\alpha_n} \leq -\ln r.
\]
Since this assertion holds for every $0<r<1$, we have 
\[
    \inf_p \sup_{q>p} \overline{\lim}_n \frac{\mathcal{E}_n(p,q)}{\alpha_n} = 0.
\]

\item [($\Leftarrow$): ] Fix an $r<1$ and choose a $(t_n)_n$ which satisfies
\[
    |t_n| \leq C r^{\alpha_n} \text{ for some } C>0 \text{ and } \forall \ n.
\]
In view of our assumption, we choose a $p$ so that $\sup_{q\geq p} \overline{\lim}_n \frac{\mathcal{E}_n(p,q)}{\alpha_n} < - \ln r$. It follows that there exists an $n_0$ such that for $n\geq n_0$;
\[
    e^{\mathcal{E}_n(p,q)} < \frac{1}{r^{\alpha_n}} \text{ for every } q\geq p.
\]
Hence, $\forall\  q>p \ \exists \ C_q>0$; $e^{\mathcal{E}_n(p,q)} \leq C_q/r^{\alpha_n}$; which in turn implies that $\forall \ q>p$; $\sup_n |t_n| e^{\mathcal{E}_n(p,q)} < +\infty$. It follows that:
\[
    \delta\left(\Lambda_1(\alpha)\right) \subseteq \bigcup_p \bigcap_{q>p} B(p,q) = \delta(\mathcal{X}).
\]
\end{enumerate}
\end{proof}

If we focus our attention to the associated exponent sequence $(\mathcal{E}_n)_n$ we have:

\begin{cor}
\[
    \delta(\mathcal{X}) = \delta(\Lambda_1(\mathcal{E})) \Leftrightarrow \inf_p \sup_{q>p} \overline{\lim}_n \frac{\mathcal{E}_n(p,q)}{\mathcal{E}_n} = 0.
\]
\end{cor}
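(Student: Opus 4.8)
The plan is to deduce the corollary directly from Proposition \ref{prop:1.4} together with inequality (4) of Proposition \ref{prop:1.1}. Specialising Proposition \ref{prop:1.4} to the case $\alpha = \mathcal{E}$ immediately gives
\[
    \delta(\mathcal{X}) \supseteq \delta(\Lambda_1(\mathcal{E})) \Leftrightarrow \inf_p \sup_{q>p} \overline{\lim}_n \frac{\mathcal{E}_n(p,q)}{\mathcal{E}_n} = 0,
\]
so the right-hand side of the asserted equivalence is already known to be equivalent to the \emph{inclusion} $\delta(\mathcal{X}) \supseteq \delta(\Lambda_1(\mathcal{E}))$. Thus everything reduces to showing that the reverse inclusion $\delta(\mathcal{X}) \subseteq \delta(\Lambda_1(\mathcal{E}))$ holds \emph{unconditionally} under our standing hypotheses; granting this, the equality $\delta(\mathcal{X}) = \delta(\Lambda_1(\mathcal{E}))$ holds if and only if the inclusion $\delta(\mathcal{X}) \supseteq \delta(\Lambda_1(\mathcal{E}))$ does, and the corollary follows.

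To prove the unconditional inclusion I would start from the description
\[
    \delta(\mathcal{X}) = \{(t_n)_n : \exists\, p;\ \forall\, q>p\ \sup_n |t_n| e^{\mathcal{E}_n(p,q)} < \infty\}
\]
recorded before Proposition \ref{prop:1.4}. Given $(t_n) \in \delta(\mathcal{X})$, fix the corresponding $p$ and choose any $q > p+1$; then $|t_n| \leq C_q\, e^{-\mathcal{E}_n(p,q)}$ for all $n$. The key input is inequality (4) of Proposition \ref{prop:1.1}, which, in the presence of both $\underline{DN}$ and $\Omega$, furnishes a constant $K = \frac{\lambda_1\lambda_2}{(1-\lambda_1)(1-\lambda_2)} > 0$ and $C > 0$ with $\mathcal{E}_n \leq K\,\mathcal{E}_n(p,q) + C$, that is, $\mathcal{E}_n(p,q) \geq \frac{1}{K}\mathcal{E}_n - \frac{C}{K}$. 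Substituting this lower bound yields $|t_n| \leq C'\, e^{-\mathcal{E}_n/K}$, hence
\[
    \overline{\lim}_n \frac{\ln|t_n|}{\mathcal{E}_n} \leq -\frac{1}{K} < 0,
\]
where I use that $\mathcal{E}_n \to \infty$, a consequence of the standing assumption $\ln(n)/\mathcal{E}_n \to 0$. By the computation $\delta(\Lambda_1(\mathcal{E})) = \{(t_n): \overline{\lim}_n \ln|t_n|/\mathcal{E}_n < 0\}$ this says exactly that $(t_n) \in \delta(\Lambda_1(\mathcal{E}))$.

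I do not expect a serious obstacle here: the substance is that the associated exponent sequence $(\mathcal{E}_n)$ is, up to the affine comparison of Proposition \ref{prop:1.1}(4), dominated by every $\mathcal{E}_n(p,q)$, so membership in $\delta(\mathcal{X})$ automatically forces the exponential decay defining $\delta(\Lambda_1(\mathcal{E}))$. The only points requiring minor care are the admissibility of the gap $q > p+1$ when invoking Proposition \ref{prop:1.1}(4), and the harmless passage from the ``$\sup_n < \infty$'' bounds to the $\overline{\lim}$ formulation; both are routine once $\mathcal{E}_n \to \infty$ is noted.
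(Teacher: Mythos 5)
Your proposal is correct and follows essentially the same route as the paper: the paper likewise proves the inclusion $\delta(\mathcal{X}) \subseteq \delta(\Lambda_1(\mathcal{E}))$ unconditionally by invoking inequality (4) of Proposition \ref{prop:1.1} (applied there to the pair $(p,p+1)$, but this is immaterial) to get $\overline{\lim}_n \ln|t_n|/\mathcal{E}_n \leq -1/C_1 < 0$, and then concludes by Proposition \ref{prop:1.4}. Your added remarks on $\mathcal{E}_n \to \infty$ only make explicit what the paper leaves implicit.
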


\begin{proof}
Let us fix a $(t_n) \in \delta(\mathcal{X})$, and choose a $p$ such that \[
    \forall \ q>p; \quad \sup_n |t_n| e^{\mathcal{E}_n(p,q)} <+\infty.
\]
In view of Proposition \ref{prop:1.1} (4), there exists constants $C_1>0$, $C_2>0$ such that \[
    \mathcal{E}_n \leq C_1 \mathcal{E}_n(p,p+1)+C_2 \quad \forall \ n.
\]
It follows that;
\[
    \sup_n |t_n|e^{\frac{1}{C_1}\mathcal{E}_n} < \infty.
\]
Hence $\exists \ D>0$; 
\[
    \ln |t_n|+\frac{\mathcal{E}_n}{C_1} < D
\] 
or \[
    \overline{\lim} \frac{\ln|t_n|}{\mathcal{E}_n} \leq -\frac{1}{C_1} < 0.
\]
Hence we always have $\delta(\mathcal{X}) \subseteq \delta(\Lambda_1(\mathcal{E}))$. Now the corollary follows from Proposition \ref{prop:1.4}.
\end{proof}

We conclude this section with a summary theorem:
\begin{thm} \label{theorem:1}
Let $\mathcal{X}$ be a nuclear Fréchet space with the properties $\underline{DN}$ and $\Omega$. We also assume that the associated exponent sequence $(\mathcal{E}_n)_n$ of $\mathcal{X}$, is a nuclear exponent sequence of finite type (i.e. $\Lambda_1(\mathcal{E}_n)$ is nuclear). Then the following assertions are equivalent:
\begin{enumerate}
\item $\delta(\mathcal{X}) = \delta(\Lambda_1(\mathcal{E}))$
\item $\inf_{1\leq p < \infty} \sup_{q>p} \overline{\lim}_n \frac{\mathcal{E}_n(q,p)}{\mathcal{E}_n} = 0 $
\item There exists an $r$-local imbedding from $\Lambda_1(\mathcal{E})$ into $\mathcal{X}$ for every $0<r<1$.
\end{enumerate}
\end{thm}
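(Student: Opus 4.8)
The plan is to recognize that the theorem repackages results already obtained in this subsection, so I would split the proof into the two equivalences $(1)\Leftrightarrow(2)$ and $(2)\Leftrightarrow(3)$.

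For $(1)\Leftrightarrow(2)$, I would simply quote the Corollary immediately preceding the statement, which asserts exactly that $\delta(\mathcal{X})=\delta(\Lambda_1(\mathcal{E}))$ holds if and only if $\inf_p\sup_{q>p}\overline{\lim}_n\frac{\mathcal{E}_n(p,q)}{\mathcal{E}_n}=0$. The hypothesis that $(\mathcal{E}_n)_n$ is a nuclear exponent sequence of finite type is precisely what guarantees the nuclearity of $\Lambda_1(\mathcal{E})$ needed for that Corollary (and for Proposition \ref{prop:1.3}), so this step is immediate.

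The content lies in $(2)\Leftrightarrow(3)$, which I would derive from Proposition \ref{prop:1.3} applied with $\alpha=\mathcal{E}$. Abbreviate $g(p)\doteq\sup_{k>p}\overline{\lim}_n\frac{\mathcal{E}_n(p,k)}{\mathcal{E}_n}$. First I would note that, since the grading norms form a fundamental system, $\mathcal{X}$ admits an $r$-local imbedding from $\Lambda_1(\mathcal{E})$ in the sense of Definition \ref{def:1} if and only if it admits an $(r,p)$-local imbedding for some index $p$; and Proposition \ref{prop:1.3} says the latter holds for a given $p$ exactly when $g(p)\leq 1-r$. Thus assertion $(3)$ is equivalent to the statement that for every $0<r<1$ there is a $p$ with $g(p)\leq 1-r$.

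It then remains to match this quantified statement with the single infimum in $(2)$. For $(2)\Rightarrow(3)$, given $r\in(0,1)$ the equality $\inf_p g(p)=0<1-r$ lets me pick $p$ with $g(p)<1-r$, yielding the desired imbedding. For $(3)\Rightarrow(2)$, the imbedding for each $r$ forces $\inf_p g(p)\leq 1-r$ for all $r\in(0,1)$, hence $\inf_p g(p)\leq 0$ on letting $r\to 1^-$; and since $U_k\subseteq U_p$ for $p<k$ gives $d_n(U_k,U_p)\leq 1$ and therefore $\mathcal{E}_n(p,k)\geq 0$, we get $g(p)\geq 0$, so $\inf_p g(p)=0$. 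I expect no real obstacle here: the only thing demanding care is this quantifier bookkeeping, converting the family of local imbeddings indexed by $r$ in $(3)$ into the single limiting condition $\inf_p g(p)=0$ of $(2)$, and checking that the non-strict inequality supplied by Proposition \ref{prop:1.3} still delivers an infimum equal to exactly zero.
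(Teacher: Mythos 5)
Your proposal is correct and matches the paper's intent exactly: the paper states this as a ``summary theorem'' with no written proof, precisely because it is the compilation of the preceding Corollary (giving $(1)\Leftrightarrow(2)$) and Proposition \ref{prop:1.3} with $\alpha=\mathcal{E}$ (giving $(2)\Leftrightarrow(3)$), which is exactly your decomposition. Your quantifier bookkeeping for $(2)\Leftrightarrow(3)$, including the observation that $\mathcal{E}_n(p,k)\geq 0$ forces $\inf_p g(p)=0$, and your reading of the $\mathcal{E}_n(q,p)$ in assertion $(2)$ as the typo for $\mathcal{E}_n(p,q)$, fill in the details the paper leaves implicit.
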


\section{Tame Spaces, $\mathcal{X}$, with Properties $\underline{DN}$, $\Omega$ and $\delta(\mathcal{X})=\delta(\Lambda_1(\mathcal{E}))$}

To every continuous linear operator $T$ between two graded F-spaces $\{\mathcal{X}, \ ^{\|\ \|_k}\}_k$ and $\{\mathcal{Y}, \ ^{|\ |_k}\}_k$ one can associate a sequence of natural numbers, $\{\sigma_T(n)\}_n$, called the \emph{characteristic of continuity} of $T$, via
\[
    \sigma_T(n) \doteq \inf \left\{ s : \quad \exists \ C>0; \ |T(x)|_n \leq C \|x\|_s \forall \ x \in \mathcal{X} \right\}
\]

In general the characteristics of continuity of operators between graded Fréchet spaces could be very disorderly. However for certain pairs of Fréchet spaces control over the growth of characteristics of continuity can be obtained. For example, in the space of analytic functions on the unit disc, $\mathcal{O}(\Delta)$, with the grading $
\|f\|_k \doteq \sum_{n=0}^\infty \left|\frac{f^n(0)}{n!}\right|^2 e^{-n/k}, \quad k=1,2,\dots , \  f \in \mathcal{O}(\Delta)$, it is not difficult to see that;
\[
    \forall \ T \in \mathcal{L}(\mathcal{O}(\Delta), \mathcal{O}(\Delta)), \ \exists \ a \in \mathbb{N} \text{ such that } \sigma_T(n) \leq an, \quad \forall \ n=1,2,\dots
\]
Following (\cite{V2.1}, \cite{DF} c.f. \cite{KN}) we specify this property as:
\begin{defin} \label{def:4}
A pair of Fréchet spaces $\mathcal{X}$ and $\mathcal{Y}$ will be called a \emph{tame-pair} in case for a given pair of generating semi-norms $\{\|\ \|_k\}_k$ of $\mathcal{X}$ and $\{|\ |_k\}_k$ of $\mathcal{Y}$ there exists an increasing function $\psi: \mathbb{N} \rightarrow \mathbb{N}$ such that
\begin{align*}
\forall \ T \in \mathcal{L}(\mathcal{X},\mathcal{Y}), \quad \exists \ N \in \mathbb{N};\quad \sigma_T(n) \leq \psi(n),\quad \forall \ n \geq N.
\end{align*}
In case $\mathcal{X}=\mathcal{Y}$ we say that the Fréchet space $\mathcal{X}$ is \emph{tame}.
\end{defin}

We will use the symbol $(\mathcal{X},\mathcal{Y}) \in \mathcal{T}$ in case $(\mathcal{X},\mathcal{Y})$ form a tame-pair.

\begin{xrem} \label{remark:3}
\ 
\begin{enumerate}
\item The definition does not depend on the choice of semi-norms in the spaces $\mathcal{X}$ and $\mathcal{Y}$.
\item Plainly the definition is equivalent to the existence of a sequence, for a given pair of generating semi-norms $\{\|\ \|_k\}_k$ of $\mathcal{X}$ and $\{|\ |_k\}_k$ of $\mathcal{Y}$, $\{S_K\}_k$ of increasing functions $S_K: \ \mathbb{N} \rightarrow \mathbb{N}$, such that for every $T \in \mathcal{L}(\mathcal{X}, \mathcal{Y})$, there exists a $K$ with $\sigma_T(n) \leq S_K(n)$ \textit{for every} $n=1,2,\dots$.
\item The space $\mathcal{O}(\Delta)$ is tame. More generally every power series space of finite type is tame, as was observed by various authors (see 2.1 on \cite{DF} for details). A proof of this appears in 2.1 of \cite{KN}.
\item If $(\mathcal{X},\mathcal{Y})\in \mathcal{T}$, then the space of all continuous linear operators from $\{\mathcal{X}, \ ^{\|\ \|_k}\}$ into $\{\mathcal{Y}, \ ^{|\ |_k}\}$ admits a representation of the form 
\begin{align*}
    \mathcal{L}(\mathcal{X},\mathcal{Y})=\bigcup_{K=1}^\infty \bigcap_{n=1}^\infty \{ & T \in \mathcal{L}(\mathcal{X}, \mathcal{Y}): \\ 
   & \exists \ C>0 \ \|T(x)\|_n \leq C |x|_{S_K (n)}, \forall \ x \in \mathcal{X} \}
\end{align*}
Using this representation one can put a linear topology on $\mathcal{L}(\mathcal{X},\mathcal{Y})$ making it an LF-space (i.e. inductive limit of Fréchet spaces) by considering the semi-norms (2.1, \cite{DF}) $\|T\|_K^n \doteq \sup_{|x|_{S_K(n)}\leq1} \|T(x)\|_n$ on the space
\[
    \{ T  \in \mathcal{L}(\mathcal{X}, \mathcal{Y}):
    \exists C>0 \ \|T(x)\|_n \leq C |x|_{S_K (n)}, \forall x \in \mathcal{X} \}, K,n \in \mathbb{N}.
\]
therefore .....
\end{enumerate}
\end{xrem}

This structure allows one to use the results of well studied LF-spaces in the study of $\mathcal{L}(\mathcal{X}, \mathcal{Y})$ for $(\mathcal{X}, \mathcal{Y})\in \mathcal{T}$. These ideas are used in the study of nuclear Fréchet spaces $\mathcal{X}$ which form a tame pair with nuclear stable power series spaces of finite or infinite type in \cite{PISZCZEK} where a complete characterization of such spaces in terms of the linear topological invariants of Vogt are obtained. 

In this section we will, once again, consider nuclear Fréchet spaces, $\mathcal{X}$ with the properties $\underline{DN}$ and $\Omega$. We will assume that the associated exponent sequence of $\mathcal{X}$, $ \mathcal{E}=(\mathcal{E}_n)_n$ in addition to our usual assumption of being finitely nuclear is also \emph{stable}; i.e. $\sup_n (\mathcal{E}_{2n}/\mathcal{E}_n) <\infty$.

\begin{thm} \label{theorem:2}
Let $\mathcal{X}$ be a nuclear Fréchet space with stable finitely nuclear associated exponent sequence $\mathcal{E} =(\mathcal{E}_n)$ and which has properties $\underline{DN}$ and $\Omega$. Then $\mathcal{X}$ is isomorphic to a power series space of finite type if and only if $\mathcal{X}$ is tame and $\delta(\mathcal{X}) = \delta(\Lambda_1(\mathcal{E}))$.
\end{thm}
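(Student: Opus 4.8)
The plan is to prove the two implications separately. The forward direction (if $\mathcal{X} \cong \Lambda_1(\alpha)$ then $\mathcal{X}$ is tame and $\delta(\mathcal{X}) = \delta(\Lambda_1(\mathcal{E}))$) should be the routine one. If $\mathcal{X}$ is isomorphic to a power series space of finite type, then by Remark \ref{remark:3}(3) it is tame, since tameness is an isomorphism invariant (Remark \ref{remark:3}(1)). For the diametral dimension equality, I would first check that the exponent sequence $\alpha$ of this power series representation is equivalent to the associated exponent sequence $(\mathcal{E}_n)$ of $\mathcal{X}$; this is because for $\Lambda_1(\alpha)$ a direct computation of the Kolmogorov diameters $d_n(U_k, U_p)$ in the hilbertian grading gives $\mathcal{E}_n(p,k) = (k-p)\alpha_n$ up to bounded error, so $\mathcal{E}_n \asymp \alpha_n$. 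Then $\delta(\mathcal{X}) = \delta(\Lambda_1(\alpha)) = \delta(\Lambda_1(\mathcal{E}))$, using the explicit description of $\delta(\Lambda_1(\cdot))$ recorded after Definition \ref{def:3}.

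The substantive direction is the converse: assuming $\mathcal{X}$ is tame and $\delta(\mathcal{X}) = \delta(\Lambda_1(\mathcal{E}))$, I want to conclude $\mathcal{X} \cong \Lambda_1(\mathcal{E})$. The strategy is to produce an isomorphism by exploiting the local imbeddings that the hypotheses guarantee. Since $\delta(\mathcal{X}) = \delta(\Lambda_1(\mathcal{E}))$, Theorem \ref{theorem:1} supplies, for every $0 < r < 1$, an $r$-local imbedding from $\Lambda_1(\mathcal{E})$ into $\mathcal{X}$; these are the building blocks of a copy of $\Lambda_1(\mathcal{E})$ sitting inside $\mathcal{X}$. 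The idea is to assemble a single continuous injection $\Lambda_1(\mathcal{E}) \hookrightarrow \mathcal{X}$ that is an isomorphism onto its range, and then to show this range is all of $\mathcal{X}$ (equivalently, complemented with trivial complement, or cofinite-dimensional and hence equal by nuclearity and stability). The tameness hypothesis is what should force the local imbeddings to be \emph{uniform} across the grading: tameness provides a single $\psi$ controlling the characteristics of continuity $\sigma_T$ of \emph{every} operator $T \in \mathcal{L}(\mathcal{X},\mathcal{X})$, which I would apply to the composition of the imbedding with its (local) inverse to trade the $r$-dependence for a linear shift $n \mapsto \psi(n)$ in the grading.

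Concretely, I would use stability of $(\mathcal{E}_n)$, i.e. $\sup_n \mathcal{E}_{2n}/\mathcal{E}_n < \infty$, to pass from imbeddings of $\Lambda_1(\mathcal{E})$ into itself under $n \mapsto \psi(n)$ to a genuine tame isomorphism, because stability means the power series space is invariant under the rescaling induced by $\psi$; one then invokes the standard structural fact (in the spirit of \cite{DF}, \cite{PISZCZEK}) that a nuclear Fréchet space with $\underline{DN}$ and $\Omega$ which tamely contains and is tamely contained in a stable finite-type power series space is isomorphic to it. The role of $\underline{DN}$ and $\Omega$ here is to guarantee that the grading is well-behaved enough (via the inequalities of Proposition \ref{prop:1.1}) that the local Hilbert space estimates glue into a global topological isomorphism rather than merely a bounded-below injection on each step.

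The main obstacle I anticipate is the surjectivity / complementation step: producing local imbeddings and even a global injection $\Lambda_1(\mathcal{E}) \hookrightarrow \mathcal{X}$ is comparatively direct from Theorem \ref{theorem:1}, but showing that this injection exhausts $\mathcal{X}$ is delicate. This is exactly where tameness must be used in an essential, non-formal way — to control \emph{all} endomorphisms simultaneously and thereby rule out $\mathcal{X}$ having a proper, non-split $\Lambda_1(\mathcal{E})$-subspace. I expect the crux to be a quantitative comparison of the diameters $\mathcal{E}_n(p,q)$ of $\mathcal{X}$ with the linear sequence $(q-p)\mathcal{E}_n$ of $\Lambda_1(\mathcal{E})$: the hypotheses give one-sided bounds (an upper bound from $\delta(\mathcal{X}) = \delta(\Lambda_1(\mathcal{E}))$ via Theorem \ref{theorem:1}, and tame control on the reverse direction), and the work is to upgrade these into the two-sided estimate $\mathcal{E}_n(p,q) \asymp (q-p)\,\mathcal{E}_n$ that characterizes a power series space, after which a known basis/isomorphism criterion closes the argument.
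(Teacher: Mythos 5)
Your forward direction is fine and matches the paper. The converse, however, rests on two steps that would fail. First, you propose to assemble the $r$-local imbeddings supplied by Theorem \ref{theorem:1} into a single continuous injection $\Lambda_1(\mathcal{E}) \hookrightarrow \mathcal{X}$ that is an isomorphism onto its range, and then to prove that the range is all of $\mathcal{X}$. But an $(r, k_r)$-local imbedding is bounded below in exactly one norm, and both the level $k_r$ and the constants degenerate as $r \uparrow 1$; nothing in the hypotheses lets these glue into a global topological imbedding, and surjectivity (or complementedness with trivial complement) is out of reach --- all one knows a priori, via \cite{V2.1}, is that $\mathcal{X}$ sits as a closed subspace of $\Lambda_1(\mathcal{E})$, and such subspaces need not even have a basis. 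Second, your proposed endgame --- upgrading to the two-sided estimate $\mathcal{E}_n(p,q) \asymp (q-p)\mathcal{E}_n$ and then invoking ``a known basis/isomorphism criterion'' --- does not exist: Kolmogorov diameter asymptotics are not a complete invariant of nuclear Fréchet spaces, and if diametral data together with $\underline{DN}$ and $\Omega$ sufficed, the tameness hypothesis would be redundant; the entire content of the theorem is that it is not.

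What your sketch is missing is the actual bridge the paper builds: from tameness of $\mathcal{X}$ to the statement that $\big(\mathcal{X}, \Lambda_1((\mathcal{E}_{2n}))\big)$ is a \emph{tame pair}, after which the conclusion is purely linear-topological --- Theorem 11 of \cite{PISZCZEK} yields the strong condition $\bar{\Omega}$, and $\underline{DN}$ together with $\bar{\Omega}$ gives that $\mathcal{X}$ is a finite-type power series space (Proposition 2.9.18 of \cite{M-V}), necessarily $\Lambda_1(\mathcal{E})$ by \cite{AKT2}. (Your appeal to a space being ``tamely contained and tamely containing'' is close in spirit, but the hypothesis one can actually verify is the tame-pair condition on all operators from $\mathcal{X}$ into the power series space.) Your phrase ``trade the $r$-dependence for a linear shift'' gestures at this bridge but supplies no mechanism for its two genuinely hard points. (i) One must construct, for every $r<1$, a local imbedding $\widehat{T}_r \colon \Lambda_1((\mathcal{E}_{2n})) \to \mathcal{X}$ whose characteristic of continuity is dominated by one function $\rho$ \emph{independent of} $r$: the paper achieves this by re-orthogonalizing the images of the basis with supports pushed beyond the $n$-th coordinate, using stability of $(\mathcal{E}_n)$ to get a uniform estimate at a single fixed negative level $s_0$, and then interpolating --- this is where stability really enters, not through an ``invariance under rescaling by $\psi$.'' (ii) For a given $S \in \mathcal{L}\big(\mathcal{X}, \Lambda_1((\mathcal{E}_{2n}))\big)$, tameness applied to each endomorphism $\widehat{T}_r \circ S$ separately only controls $\sigma_{\widehat{T}_r \circ S}(n)$ for $n$ beyond a threshold depending on the operator (Definition \ref{def:4}), so uniformity over the uncountable family $\{\widehat{T}_r \circ S\}_{r<1}$ has to come from the closed-graph/Grothendieck factorization argument in the LF-structure of Remark \ref{remark:3}(4); only then does the lower bound $|\widehat{T}_r(Sx)|_{r_{k_r}} \geq |Sx|_r$ convert into $\sigma_S(r) \leq S_\alpha(r_{k_r})$ for one fixed $\alpha$, i.e.\ into the tame-pair property. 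Without (i) and (ii) the argument does not close.
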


\begin{proof}
\ 
\begin{enumerate}
\item [($\Rightarrow$): ] If $\mathcal{X}$ is isomorphic to a power series space of finite type then $\mathcal{X}$ must be isomorphic to $\Lambda_1(\mathcal{E})$ by Proposition \ref{prop:1.1} of \cite{AKT2}. So $\delta(\mathcal{X}) = \delta(\Lambda_1(\mathcal{E}))$. Moreover $\mathcal{X}$ being a finite type power series space, is tame in view of Remark \ref{remark:3} (4) above.

\item[($\Leftarrow$): ] Let us fix a tame nuclear Fréchet space with stable finitely nuclear associated exponent sequence $\mathcal{E}=(\mathcal{E}_n)$, and which has properties $\underline{DN}$ and $\Omega$. In view of \cite{V2.1}, we can imbed $\mathcal{X}$ into $\Lambda_1(\mathcal{E})$ as a closed subspace. We will do so and consider the grading on $\mathcal{X}$ induced by $(\Lambda_1(\mathcal{E}), \ ^{|\ |_r})$. Throughout this proof we will, as usual, use the notation $\epsilon_n$ to denote the sequence $(0,\dots,0,1,0,\dots)$ where $1$ sits in the $n^{\text{th}}$ place, $n=1,2,\dots$. In view of Theorem \ref{theorem:1} above there exists an $r$-local imbedding from $\Lambda_1(\mathcal{E})$ into $\mathcal{X}$ for every $0\leq r < 1$. Let us fix an $r_0 <1$ and a corresponding $(r_0,r_{k_0})$, $r_{k_0} < 1$, local imbedding $T$, from $\Lambda_1(\mathcal{E})$ into $\mathcal{X}$. Say $|Tx|_{r_{k_0}} \geq C_0 |x|_{r_0}, \ \forall \ x \in \Lambda_1(\mathcal{E})$. Let $e_n \doteq (\epsilon_n/e^{r_0 \mathcal{E}_n}), \ n=1,2,\dots$ be the canonical  orthonormal basis of $\Lambda_{r_0}[\mathcal{E}_n]$, and set $g_n \doteq T(e_n), \ n=1,2,\dots$.

We note that $\{g_n\}_{n=1}^\infty$ is a finitely linearly independent sequence since $T$ is a local imbedding. We choose a sequence $\{f_n\}$ in $\mathcal{X}\subseteq\Lambda_1(\mathcal{E})$ satisfying;
\begin{enumerate}
\item [(1)] $f_n \in \text{span} \{g_1,\dots,g_{2n}\}, \ n=1,2,\dots$.
\item [(2)] $\langle f_n, f_s \rangle_{r_{k_0}} = 0, \ s=1,\dots, n-1; \ n=1,2,\dots$.
\item [(3)] $\langle f_n, \epsilon_k \rangle_{r_{k_0}} = 0, \ k=1,\dots, n; \ n=1,2,\dots$.
\item [(4)] $\langle f_n, f_n \rangle_{r_{k_0}} = 1, \ n=1,2,\dots$.
\end{enumerate}
where $\langle$, $\rangle_{r_{k_0}}$ is the inner product in $\Lambda_{r_{k_0}}[\epsilon]$.

Such a sequence exists and can be selected by induction since for each $n \in \mathbb{N}$, the space $\{g_1, \dots, g_{2n}\}$ is $2n$-dimensional and we impose $2n-1$ conditions for $f_n$.

Hence $f_n = \sum_{i=1}^{2n} c_i^n g_i$ for some scalars $\{c_i^n\}_i, \ n=1,2,\dots$ . We have;
\[
    1 = |f_n|_{r_{k_0}} \geq C_0 \left| \sum_{i=1}^{2n} c_i^n e_i \right|_{r_{0}} = C_0 \left( \sum_{i=1}^{2n} |c_i^n|^2 \right)^{\frac{1}{2}}, \   n=1,\dots
\]
So;
\begin{align}
    \sum_{i=1}^{2n} |c_i^n|^2 \leq \frac{1}{C_0}, \ \forall \ n \in \mathbb{N}.
\end{align}

Now fix an $s$, $0<s<1$, and estimate $|f_n|_s, \ n\in \mathbb{N}$ as:

\begin{align}
    |f_n|_s & = \left| \sum_{i=1}^{2n} c_i^n g_i \right|_s \leq \sum_{i=1}^{2n} |c_i^n| |g_i|_s \leq C_1 \sum_{i=1}^{2n} |c_i| |e_n|_{\sigma_T(s)} \nonumber \\
    & = C_1 \sum_{i=1}^{2n} |c_i^n| \left| \frac{\epsilon_i}{e^{r_0 \mathcal{E}_i}} \right|_{\sigma_T(s)} = C_1 \sum_{i=1}^{2n} |c_i^n| e^{(\sigma_T(s)-r_0)\mathcal{E}_i}
\end{align}
for some $C_1>0$, where $\sigma_T$ is the characteristic of continuity of $T$ with respect to the connonical gradings of $\Lambda_1(\mathcal{E}).$

Choose a $K(s)$, with $\max \{\sigma_T(s), r_0\} < K(s) < 1$, and proceed with the estimate (2.2),

\begin{align*}
    \sum_{i=1}^{2n} |c_i^n| \  & e^{(\sigma_T(s)-r_0)\mathcal{E}_i + K(s)\mathcal{E}_i - K(s)\mathcal{E}_i} \\
    \leq \  &  e^{(K(s)-r_0)\mathcal{E}_{2n}} \left( \sum_{i=1}^{2n} |c_i^n|^2 \right)^{\frac{1}{2}} \left( \sum_{i=1}^{2n} e^{2(\sigma_T(s)-K(s))\mathcal{E}_i} \right)^{\frac{1}{2}} \\
    \leq \  &  C e^{(K(s)-r_0)\mathcal{E}_{2n}},
\end{align*}
to obtain $\forall s<1$, $\exists C=C(S,T)$ and $K(s) < 1$ such that

\begin{align}
    |f_n|_s \leq C e^{(K(s)-r_0)\mathcal{E}_{2n}}, \quad n=1,2,\dots
\end{align}

We choose an increasing sequence $\{ K^+(s)\}_s$ with $K(s)<K^+(s)<1, \ \forall s<1$. 

On the other hand we also have, for each $n \in \mathbb{N}$, a representation of $f_n$ as;
\[
    f_n = \sum_{k=n+1}^\infty \beta_k^n \epsilon_k
\]
for some sequence $\{\beta_k^n\}_k$. For an $-\infty<r<r_{k_0}$ we estimate; 
\begin{align}
    |f_n|_r^2 & = \sum_{s=n+1}^\infty |\beta_s^n|^2 e^{2r\mathcal{E}_s} = \sum_{s=n+1}^\infty |\beta_s^n|^2 e^{2r_{k_0}\mathcal{E}_s} e^{2(r-r_{k_0})\mathcal{E}_s} \nonumber \\
    & \leq e^{2(r-r_{k_0})\mathcal{E}_{n+1}} |f_n|^2_{r_{k_0}} = e^{2(r-r_{k_0})\mathcal{E}_{n+1}}, \quad n=1,2,\dots.
\end{align}

In view of stability of $\{\mathcal{E}_n\}$ there exists a $C_0>0$ such that $C_0 \mathcal{E}_{2n} \leq \mathcal{E}_{n+1}, \ \forall n=1,2,\dots$. Hence proceeding with the estimate (2.4) we have;

\begin{align}
    |f_n|_r^2 \leq e^{2C_0(r-r_{k_0})\alpha_n} \quad \forall n \in \mathbb{N} \text{ and } -\infty<r<r_{k_0}.
\end{align}

We now choose and fix an $s_0$ with $-\infty < s_0 < -2/c_0$. In view of (2.5) we have;
\begin{align}
    |f_n|_{s_0} \leq e^{(C_0 s_0 - C_0 r_{k_0} + r_0)\mathcal{E}_{2n}} e^{-r_0\mathcal{E}_{2n}} \leq e^{-\mathcal{E}_{2n}} e^{-r_0\mathcal{E}_{2n}} \quad n=1,2,\dots.
\end{align}

We stress that $s_0$ depends only on the associated exponent sequence $\mathcal{E}$. Now for a sequence $\{\lambda_n\}_{n=1}^\infty$ and $s<1$, (2.3) above gives:
\[
    \sum_{i=1}^\infty |\lambda_i| |f_i|_s e^{r_0 \mathcal{E}_{2n}} \leq C \left( \sum |\lambda_i|^2 e^{2K^+(s)\mathcal{E}_{2n}} \right)^{\frac{1}{2}}
\]
for some $C=C(s)$.

It follows that the assignment that sends $\epsilon_n$ to $f_n e^{r_0 \mathcal{E}_{2n}}, \ n=1,2,\dots$, defines a continuous linear operator $\widehat{T}$ from $\Lambda_1(\mathcal{E}_{2n})$ into $\mathcal{X}$ that satisfies;

\begin{align*}
    |\widehat{T}(x)|_{r_{k_0}} = \left|\widehat{T}\left( \sum^\infty_{i=1} x_i \epsilon_i \right)\right|_{r_{k_0}} & = \left|\sum^\infty_{i=1} x_i f_i e^{r_0 \mathcal{E}_{2i}}\right|_{r_{k_0}} \\
    & = \left( \sum^\infty_{i=1} |x_i|^2 e^{2 r_0 \mathcal{E}_{2i}} \right)^\frac{1}{2} \\
    & = |x|_{r_0}, \quad \forall x \in \Lambda_1(\mathcal{E}_{2n}).
\end{align*}

Hence $\widehat{T}$ is an $(r_0, r_{k_0})$-local isomorphism from $\Lambda_1(\mathcal{E}_{2n})$ into $\mathcal{X}$.

Moreover $\widehat{T}$ has the additional property that it extends to be continuous from $\Lambda_0[(\mathcal{E}_{2n})]$ into $\Lambda_{s_0}[(\mathcal{E}_{n})]$. Indeed in view of (2.6), for $x \in \Lambda_{0}[(\mathcal{E}_{n})]$ we have;

\begin{align*}
    |\widehat{T}(x)|_{s_0} = \left|\widehat{T}\left( \sum_i x_i \epsilon_i \right)\right|_{s_0} & = \left| \sum^\infty_{i=1} x_i f_i e^{r_0 \mathcal{E}_{2i}} \right|_{s_0} \\
    & \leq \sum^\infty_{i=1} |x_i| |f_i|_{s_0} e^{r_0 \mathcal{E}_{2i}} \leq \sum^\infty_{i=1} |x_i| e^{-\mathcal{E}_{2i}} \\
    & \leq \left( \sum^\infty_{i=1} e^{-2 \mathcal{E}_{2i}} \right)^\frac{1}{2} |x|_0.
\end{align*}

We now vary $r<1$, and obtain a family $\{\widehat{T}_r\}$ of $(r,r_{k_r})$-local imbeddings from $\Lambda_1\big((\mathcal{E}_{2n})\big)$ into $\mathcal{X}$ with the additional property mentioned above.

Using the elementary inequality 
\[
    |\ |_{t_2} \leq |\ |_{t_1}^{\frac{t_3-t_2}{t_3-t_1}} |\ |_{t_3}^{\frac{t_2-t_1}{t_3-t_1}} ,
\]
for a given $t_1<t_2<t_3$, which is valid in any power series space, for any $s_0<s<1$ we choose a $\rho(s)<1$ such that
\begin{align}
    \forall \ 0\leq r < 1 \ \exists C > 0: \quad |\widehat{T}_r(x)|_s \leq C |x|_{\rho(s)}
\end{align}
After these preparations we now proceed to show that $\Big(\mathcal{X}, \Lambda_1\big((\mathcal{E}_{2n})\big)\Big) \in \mathcal{T}$.

According to our assumption, there exists a sequence $\{S_\alpha\}_{\alpha=1}^\infty$ of increasing functions from $\mathbb{N}$ into $\mathbb{N}$, such that $\forall T \in \mathcal{L}(\mathcal{X}, \mathcal{X})$ and $\exists \alpha \in \mathbb{N}$ such that $\sigma(T)\leq S_\alpha$.

Let now $S$ be a given continuous linear operator from $\mathcal{X}$ into $\Lambda_1\big(\mathcal{E}_{2n})\big)$. In view of (2.7) above, the family, $\{\widehat{T}_r \circ S\}_{r<1}$ of continuous linear operators from $\mathcal{X}$ into $\mathcal{X}$ satisfy;
\[
    \sigma_{\widehat{T}_r \circ S} \leq \sigma_S \circ \rho. 
\]
Hence this family is in $F \doteq \{U \in \mathcal{L}(\mathcal{X}, \mathcal{X}): \quad \sigma_U \leq \sigma_{S} \circ \rho \}$. On $F$ we consider the topology coming from the semi-norms $\{\|\cdot\|_s^{\sigma_{S} \circ \rho(s)}\}_{s=1}^\infty$, and on $\mathcal{L}(\mathcal{X}, \mathcal{X}) = \bigcup_{\alpha=1}^\infty \bigcap_{s=1}^\infty \{U \in \mathcal{L}(\mathcal{X}, \mathcal{X}): \ \|U\|_s^{S_\alpha(s)} < \infty \}$, we consider the LF-space structure as explained in Remark \ref{remark:3} (4) above. Since evaluation at points of $\mathcal{X}$ are continuous in both $F$ and $\mathcal{L}(\mathcal{X}, \mathcal{X})$, the inclusion $F \subseteq \mathcal{L}(\mathcal{X}, \mathcal{X})$ has a sequentially closed graph. It follows that there exists $\alpha$ such that $F \subseteq \bigcap_{s=1}^\infty \{U \in \mathcal{L}(\mathcal{X}, \mathcal{X}): \ \|U\|_s^{S_{\alpha(s)}} < \infty\}$ in view of Grothendieck factorization theorem \cite[p.~68]{Kothe2}. It follows that there exists an $\alpha \in \mathbb{N}$ such that;
\[
    \sigma_{\widehat{T}_r \circ S} \leq S_\alpha \quad \forall r<1.
\]
In particular for each $r<1$ there exists a $\widehat{C}>0$ such that;
\begin{align}
    |S x|_r \leq C \left|\widehat{T}_r(Sx)\right|_{r_{k_r}} \leq \widehat{C} |x|_{S_\alpha (r_{k_r})}.
\end{align}
Now if we set $\widehat{S}_\alpha (r) \doteq S_\alpha(r_{k_r})$, $r<1$, $\alpha \in \mathbb{N}$, the analysis above shows that $\forall S \in \mathcal{L}\Big(\mathcal{X}, \Lambda_1\big((\mathcal{E}_{2n})\big)\Big) \  \exists \ n \in \mathbb{N}$ such that $\sigma_S \leq \widehat{S}_\alpha$. Hence it follows that $\Big(\mathcal{X}, \Lambda_1\big((\mathcal{E}_{2n})\big)\Big) \in \mathcal{T}$.

Now Theorem 11 of \cite{PISZCZEK} implies that $\mathcal{X}$ satisfies the strong $\Omega$ condition, $\bar{\Omega}$, of Vogt. This together with our assumption that, $\mathcal{X}$ has the property $\underline{DN}$ allows us to conclude that $\mathcal{X}$ is isomorphic to a finite type power series space (Proposition 2.9.18 of \cite{M-V}). Hence $\mathcal{X} \cong \Lambda_1 (\mathcal{E})$ (Proposition 1.1 of \cite{AKT2}).
\end{enumerate}
\end{proof}

\vspace{0.5cm}

\section{Spaces of Analytic Functions}

In this section we will focus our attention to a particular class of nuclear Fréchet spaces with the properties $\underline{DN}$ and $\Omega$, namely the spaces of analytic functions on Stein manifolds. Stein manifolds, being closed connected submanifolds of complex euclidean spaces $\mathbb{C}^N$, possess a rich supply of analytic functions. These spaces, with the usual topology of uniform convergence on compact subsets, form an important subclass of Fréchet spaces with the properties $\underline{DN}$ and $\Omega$. The linear topological properties of $\mathcal{O}(M)$, the Fréchet space of analytical functions on a Stein manifold $M$, and the complex analytic properties of $M$ that are reflected on $M$ by the type of $\mathcal{O}(M)$, have been studied by several authors (see \cite{AYI} \cite{Zah1} and the references therein). In this context, we show, in subsection \ref{subs:3.1} that local imbedding of finite type power series spaces into $\mathcal{O}(M)$ can be used to construct Green's functions in $M$. In subsection \ref{subs:3.2} we classify Stein manifolds $M$, for which $\mathcal{O}(M)$ is tame. Some results and concepts from Pluripotential Theory will be used in the sequel. For undefined terminology and background in these matters we refer the reader to \cite{KLIMEK}.

\subsection{Local imbeddings of finite type power series spaces into $\mathcal{O}(M)$ and Green's functions} \label{subs:3.1}

\ 

Let $M$ be a complex manifold and fix a point $z_0 \in M$. We will use the symbol $PSH(M)$ to denote the set of all plurisubharmonic functions on $M$. Employing norms in a local chart centered at $z_0$ we consider
\[
    \mathcal{L}_{z_0} \doteq \{u \in PSH(M), \ u \leq 0 \text{ and } u(z)-\ln\|z-z_0\| \text{ is bounded near } z_0\}
\]
and set 
\[
    g(\xi, z_0) \doteq \sup\{u(\xi): \ u \in \mathcal{L}_{z_0}\} .
\]
This assignment, if not $\equiv - \infty$, defines a plurisubharmonic function on $M$. We will call $g(\cdot z_0)$ the \emph{Green's function} of $M$ with pole at $z_0$, and say that Green's function with pole at $z_0$ exists in case $g(\cdot, z_0)$ is not identically equal to $-\infty$. If $M$ is parabolic i.e. if it does not possess a nonconstant bounded plurisubharmonic function then of course, no Green's function exist. In one variable nonparabolicity characterizes existence of Green's functions, \cite{S-N}, however in several complex variables there is no such general result. The difficulty seems in constructing a negative plurisubharmonic function with a pole at the given point from the class of negative plurisubharmonic functions.

\begin{prop} \label{prop:3.1}
{\em
Let $M$ be a Stein manifold of dimension $d$ and let $z_0$ be a point in $M$. If there exists a local imbedding from $\Lambda_1(\alpha)$ into $\mathcal{O}(M)$, for some finitely nuclear exponent sequence $\{\alpha_n\}_n$, with $\underline{\lim} \  n^{1/d}/\alpha_n > 0$, then Green's function with a pole at $z_0$ exists. 
}
\end{prop}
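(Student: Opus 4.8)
The plan is to convert the local imbedding into a negative plurisubharmonic function on $M$ that carries a logarithmic pole at $z_0$ and is not identically $-\infty$; since $g(\cdot,z_0)$ is the upper envelope of the class $\mathcal{L}_{z_0}$, producing one such function forces $g(\cdot,z_0)\not\equiv-\infty$. Fix the Hilbertian grading of $\mathcal{O}(M)$ given by $L^2$-norms over an exhausting sequence of compacts $K_k\uparrow M$. The hypothesized $r$-local imbedding is in particular an $(r,k)$-local imbedding for some $k$, so the implication (i)$\Rightarrow$(ii) of Proposition \ref{prop:1.2} (which is purely Hilbert-space-theoretic) yields a sequence $\{g_n\}_{n\ge 0}\subset\mathcal{O}(M)$, orthonormal in $\mathcal{O}(M)_k$, with $\overline{\lim}_n \tfrac{\ln\|g_n\|_s}{\alpha_n}<1-r$ for every $s$. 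First I would run a jet/dimension count: for each $N$ the space $\mathrm{span}\{g_0,\dots,g_N\}$ is $(N+1)$-dimensional, while vanishing of the full $(m-1)$-jet at $z_0$ imposes $\binom{m-1+d}{d}\sim m^d/d!$ linear conditions. Taking $m_N$ maximal with $\binom{m_N-1+d}{d}<N+1$ produces a nonzero $f_N=\sum_{n\le N}c_n^N g_n$, normalized by $\sum_n|c_n^N|^2=\|f_N\|_k^2=1$, vanishing to order $\ge m_N$ at $z_0$. Here $m_N\sim c_d N^{1/d}$, and the standing hypothesis $\underline{\lim}_n n^{1/d}/\alpha_n>0$ gives $\alpha_N\le CN^{1/d}$, so $m_N\ge c_0\alpha_N\to\infty$ for a fixed $c_0>0$. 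This step is the conceptual heart of the argument, and it is exactly where the dimensional hypothesis $\alpha_n=O(n^{1/d})$ is used.

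Next I would control $f_N$ quantitatively. By the sub-mean value inequality $\sup_L|f_N|\le C_L\|f_N\|_s$ for a suitable $s$, and Cauchy--Schwarz together with $\sum_n|c_n^N|^2=1$, the growth bound on the $g_n$, and $\ln N=o(\alpha_N)$ (finite nuclearity) give, on every compact $L\subset M$, a constant $\gamma_L$ with $\sup_L|f_N|\le e^{\gamma_L\alpha_N}$ for large $N$; on the other hand $\|f_N\|_k=1$ forces $\sup_{K_k}|f_N|\ge c_1>0$. Set $v_N:=\tfrac{1}{m_N}\ln|f_N|\in PSH(M)$. Cauchy estimates applied to the vanishing of order $\ge m_N$ give, near $z_0$,
\[
    v_N(z)\le \ln\|z-z_0\|+\tfrac{\gamma}{c_0}+o(1),
\]
a uniform logarithmic pole, while $\sup_L v_N\le \gamma_L/c_0$ shows that $\{v_N\}$ is locally uniformly bounded above; finally $\sup_{K_k}v_N\ge \tfrac{\ln c_1}{m_N}$, so that $\limsup_N\sup_{K_k}v_N\ge 0$.

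I would then form the envelope $u_0:=\bigl(\limsup_N v_N\bigr)^*$. Local uniform boundedness above makes $u_0$ plurisubharmonic (unless it is $\equiv-\infty$), with $u_0(z)\le\ln\|z-z_0\|+O(1)$ near $z_0$, and after subtracting a constant we may assume $u_0\le 0$. The delicate point, which I expect to be the main analytic obstacle, is that $u_0\not\equiv-\infty$: were $u_0\equiv-\infty$, then Hartogs' lemma (\cite{KLIMEK}) applied to the locally uniformly bounded family $\{v_N\}$ would force $\sup_{K_k}v_N\to-\infty$, contradicting $\limsup_N\sup_{K_k}v_N\ge 0$; thus non-degeneracy is secured precisely by the $L^2$-normalization $\|f_N\|_k=1$. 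Hence $u_0$ is a negative plurisubharmonic function, not identically $-\infty$, carrying an upper logarithmic pole at $z_0$.

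It remains to land inside $\mathcal{L}_{z_0}$. The construction controls $u_0-\ln\|z-z_0\|$ only from above, whereas $\mathcal{L}_{z_0}$ asks for a two-sided bound. This I would resolve by the standard fact that the pluricomplex Green function is unchanged when the defining class is enlarged to plurisubharmonic $u\le 0$ with merely an upper logarithmic pole (\cite{KLIMEK}), so the one-sided bound already gives $g(\cdot,z_0)\ge u_0\not\equiv-\infty$; alternatively one replaces $u_0$ near $z_0$ by $\max\bigl(u_0,\ln\|z-z_0\|-C\bigr)$ for $C$ large, gluing a local logarithmic cone to obtain a genuine element of $\mathcal{L}_{z_0}$. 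Either way the Green's function with pole at $z_0$ exists.
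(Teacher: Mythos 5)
Your proof is correct, but it takes a genuinely different route from the paper's. The paper does not argue intrinsically at $z_0$: it invokes the Forn{\ae}ss--Stout spreading theorem \cite{S-F} to get a local biholomorphism $\Phi\colon\Delta^d_e\twoheadrightarrow M$ with $\Phi(0)=z_0$, imbeds $\mathcal{O}(M)$ into $\mathcal{O}(\Delta^d_e)\cong\Lambda_1(n^{1/d})$ by composition with $\Phi$, and produces $g_n$ in the span of the imbedded vectors that are orthogonal to the first $n$ monomial coordinates of $\Lambda_{k_0}[n^{1/d}]$; since the lexicographic bijection satisfies $|\rho(n)|\asymp n^{1/d}$, this forces vanishing order $\gtrsim n^{1/d}$ at $0$, and the pole is then extracted by Cauchy--Schwarz on the Taylor tail, normalizing $\ln|h_n|$ by $\alpha_n$. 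You obtain the same high-order vanishing by a direct jet/dimension count on $M$ itself ($\binom{m-1+d}{d}$ linear conditions against $N+1$ dimensions), which replaces the spreading theorem and the lexicographic bookkeeping by elementary linear algebra plus the Schwarz lemma in a coordinate ball; your normalization of $\ln|f_N|$ by the vanishing order $m_N$ rather than by $\alpha_N$ also streamlines the pole estimate, and the hypothesis $\underline{\lim}\, n^{1/d}/\alpha_n>0$ enters both proofs at exactly the same spot (converting order $\sim N^{1/d}$ into $\geq c_0\alpha_N$). The analytic core is identical: the $L^2$-normalization $\|f_N\|_k=1$ forcing $\sup_{K_k}|f_N|\geq c_1>0$, and Hartogs' lemma \cite{KLIMEK} to rule out $u_0\equiv-\infty$, mirror the paper's use of $|g_n|_{k_0}=1$ and Hartogs. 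Two points to tighten. First, your upper bound $\sup_L v_N\leq\gamma_L/c_0$ is in fact uniform in $L$, because Proposition~\ref{prop:1.2}(ii) gives the bound $1-r$ uniformly in $s$, so $v_N\leq(1-r)/c_0+o(1)$ on every compact; this uniformity is what legitimizes ``subtract a constant so that $u_0\leq 0$'' and should be stated, since a genuinely $L$-dependent $\gamma_L$ would leave $u_0$ unbounded above. Second, of your two fixes for the one-sided pole, the first (the Green's function is taken over the class with merely an upper logarithmic pole, as in \cite{KLIMEK}) is the right one and is exactly what the paper itself implicitly uses --- its $u$ also satisfies only $u\leq C_2\ln\|z-z_0\|+C_3$; the max-gluing alternative is shakier as stated, since gluing $\max\bigl(u_0,\ln\|z-z_0\|-C\bigr)$ across a sphere requires a finite lower bound for $u_0$ on a neighborhood of that sphere, which can fail on the $-\infty$ set of $u_0$. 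What the paper's route buys is compatibility with the rest of Section~3, where the same imbedding $\mathcal{O}(M)\subseteq\Lambda_1(n^{1/d})$ is reused; what yours buys is a self-contained, intrinsic argument on $M$ avoiding the spreading theorem.
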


\begin{proof}
Let us fix $M$ and the point $z_0$. In view of \cite{S-F} there exists a local biomorphism $\Phi: \ \Delta^d_e \twoheadrightarrow M$, from the polydisc with centre $0$ and radius $e$ of $\mathbb{C}^d$ onto $M$ such that $\Phi(0)=z_0$. We will use $\Phi$ to imbed $\mathcal{O}(M)$ into $\mathcal{O}(\Delta^d_e)$ via the composition operator $f \mapsto f\circ \Phi$, $f \in \mathcal{O}(M)$ (\cite{AT1}). We will choose a bijection $\rho: \mathbb{N} \rightarrow \mathbb{N}^d$ such that $|\rho(n)|$ is strictly increasing with $n$ and is ordered lexicographic on the level sets of $|\rho(\cdot)|$, and use it to define an isomorphism between $\Lambda_1(n^{1/d})$ and $\mathcal{O}(\Delta^d_e)$ by the correspondence $\epsilon_i \rightarrow z^{p(i)} = z_1^{p_1(i)} \dots z_d^{p_d(i)}, \ i=1,2,\dots$. We note that there exists constants $\beta_1 > 0$ and $\beta_2>0$ such that $\beta_1(\rho(n)) \leq n^{1/d} \leq \beta_2(\rho(n)), \ n=1,2,\dots$ (see e.g. \cite{R}).

Let us fix an $(r_0, k_0)$-local imbedding, $T$, from $\Lambda_1(\alpha_n)$ into $\Lambda_1 (n^{1/d})$, $T(\Lambda_1(\alpha_n)) \subseteq \mathcal{O}(M) \subseteq \Lambda_1(n^{1/d})$, which exists in view of our assumptions. Let
\[
f_n \doteq T \left(\frac{\epsilon_n}{e^{r_0\alpha_n}}\right), \ n=1,2,\dots. 
\]
The sequence $\{f_n\}_n$ is finitely linearly independent, so we can, choose a sequence $\{g_n\}^\infty_{n=1}$ of elements of $\mathcal{O}(M)$ with
\begin{enumerate}
\item $g_n \in \text{span} \{f_1,\dots,f_n\}, \ n=1,2,\dots$
\item $\langle g_n, \epsilon_i \rangle_{k_0}$, $i=1,\dots,n-1$ where $\langle$, $\rangle_{k_0}$ is the inner product in $\Lambda_{k_0}[n^{1/d}] \  \forall \ n=1,2,\dots$ 
\item $|g_n|_{k_0} = 1$, $n=1,2,\dots$
\end{enumerate}
Note that if $g_n = \sum^n_{k=1} c_k^n f_k$ then we estimate, as in the above arguments;
\begin{align*}
    \sum^n_{k=1} |c_k^n|^2 = \sum^n_{k=1} \left| \frac{c_k^n}{e^{2r_0\alpha_k}} \right|^2 e^{2r_0\alpha_k} = \left| \sum^n_{k=1} c_k^n \frac{\epsilon_k}{e^{2r_0\alpha_k}}\right|^2_{r_0} & \leq C \left| T \left( \sum^n_{k=1} c_k^n \frac{\epsilon_k}{e^{2r_0\alpha_k}}\right) \right|^2_{k_0} \\
    & = C |g_n|^2_{k_0} \leq C,
\end{align*}
for some $C>0$ and for all $n=1,2,\dots$.

Hence, for a given $r<1$;
\begin{align} \label{eqn:3.4}
|g_n|_r & = \left| \sum_{i=1}^n c_i^n f_i \right|_r \leq C_1 \sum_{i=1}^n |c_i^n| \left| \frac{\epsilon_i}{e^{r_0\alpha_n}} \right|_{\sigma_T(r)} \nonumber \\
& = \sum_{i=1}^n |c_i^n| e^{(\sigma_T(r)-r_0)2n} \leq C_2 e^{\rho(r)\alpha_n}, \quad n=1,2,\dots
\end{align}
and for some constants $C_1>0$, $C_2>0$ where $\sigma_T(r)-r_0 \leq \rho(r) < 1$ is a chosen number that depends on $r$ (and $T$).

On the other hand in view of equation \eqref{eqn:3.2} above, each $g_n,\ n=1,2,\dots$ has an expansion as \[
    g_n = \sum_{i \geq n} d_i^n \epsilon_i \leftrightarrow \sum_{i \geq n} d_i^n z^{\rho(i)}
\]
in $\Lambda_1(n^{1/d})$ and in $\mathcal{O}(\Delta^d_e)$, respectively. By abuse of notation we will think of $g_n \in \mathcal{O}(M) \subseteq \mathcal{O}(\Delta_e^d), \ n=1,2,\dots$ as an analytic function on $\Delta_e^d$ with Taylor series;
\begin{align} \label{eqn:3.5}
    g_n(z) = \sum_{|t|\geq \frac{1}{\beta_2} n^{1/d}} \alpha_t^n z_1^{t_1} \dots z_d^{t_d}, \quad n=1,2,\dots.
\end{align}

Choose $h_n \in \mathcal{O}(M)$ such that $g_n = h_n \circ \Phi, \ n=1,2,\dots$. For a given compact set $K \subseteq M$ choose $r<1$ so that $\Phi(\Delta^d_{e^r}) \supseteq K$. So in view of equation \eqref{eqn:3.4};
\[
    \sup_{z \in K} |h_n(z)| \leq \sup_{\xi \in \Delta_{e^r}^d} |g_n(\xi)| \leq C |g|_{r^+} \leq C C_2 e^{\rho(r^+)\alpha_n} 
\]
for some constant $C>0$ and a choice of $r<r^+<1$.

Hence the function
\[
    u(z) \doteq \overline{\lim_{\xi \rightarrow z}}\ \overline{\lim}_n \frac{\ln |h_n(\xi)|}{\alpha_n},\quad z \in M
\]
defines a plurisubharmonic function on $M$ that is bounded by $1$. This plurisubharmonic function is not $\equiv -\infty$. To see this let us assume, with the anticipation of getting a contradiction that it is identically equal to $-\infty$. Fix large $k_0<r<1$ such that $|x|_{k_0} \leq C \sup_{z\in \overline{\Delta}_{e^r}^d} |x(z)|, \ \forall x \in \mathcal{O}(\Delta^d_e)$ and choose a compact set $K \subset M$ that contains $\Phi(\overline{\Delta}_{e^r}^d)$.

Our assumption and Hartogs theorem (\cite[p.~70]{KLIMEK}) gives for each $N\in \mathbb{N}$, an $n_0 \in \mathbb{N}$ such that:
\[
    \sup_{z \in K} |h_n(z)| \leq e^{-N\alpha_n}, \quad n\geq n_0.
\]

This in turn gives that for some $C>0$
\[
    1 = |g_n|_{r_0} \leq C e^{-N\alpha_n}, \quad \text{ for } n \geq n_0.
\]
So $u$ is not identically $-\infty$.

Now consider a $z \in M$ near $z_0$, say with $\|z-z_0\| = e^r$ for some very large and negative $r$. Since $\Phi$ is a local biholomorphism, there exists a $C_0>0$, independent of $r$, and $\xi \in \Delta_e^d$ with $\|\xi\| < C_0 e^r$ such that $\phi(\xi) = Z$. Using \eqref{eqn:3.5} we estimate, with, $k_0^- < k_0$ and $C_1>0$,
\begin{align*}
    |g_n(\xi)| & \leq \sum_{|t| \geq C n^{1/d}} |\alpha_t^n| C_0^{|t|} e^{r|t|} e^{-k_0^-} |t| e^{k_0^- |t|} \\
    & \leq C_0 e^{C(r-k_0^-)n^{1/d}} \left(\sum_{|t|} |\alpha_t^n|^2 e^{2k_0 |t|}\right)^{1/2} \leq C_1 e^{C(r-k_0^-)n^{1/d}}, \quad n=1,2,\dots
\end{align*}

Hence our assumption on $(\alpha_n)$ implies $\exists \  C_2>0$, $C_3>0$:
\[
    \overline{\lim}_n \frac{\ln |h_n(z)|}{\alpha_n} \leq \overline{\lim}_n C(r-k_0^-) \frac{n^{1/d}}{\alpha_n} \leq C_2 \ln \|z-z_0\|+C_3.
\]
So $\frac{u}{C_2}$ has a logarithmic singularity at $z_0$, and is a bounded plurisubharmonic function. It follows that $g_m(\cdot,z_0)$ is not identically $-\infty$. 
\end{proof}

\subsection{Diametral dimensions of analytic function spaces} \label{subs:3.2}

\ 

In this subsection we will investigate the diametral dimension of the spaces of analytic functions on Stein manifolds. These invariants for Fréchet spaces are in a sense dual to approximate diametral dimensions, however they are more extensively studied.

For a nuclear Fréchet space $\mathcal{X}$ with a neighborhood basis of $0$, $\{U_p\}_{p=1}^\infty$, consisting of discs, $\Delta(\mathcal{X})$, is defined by (in the notation of Section 1):
\begin{align*}
    \Delta(\mathcal{X}) & = \{(t_n);\ \forall p \ \exists q:\quad t_n d_n(U_q,U_p)\rightarrow 0\}. \\
                        & = \{(t_n);\ \forall p \ \exists q:\quad t_n e^{-\mathcal{E}_n(p,q)}\rightarrow 0\}.
\end{align*}
As the notation suggests, it is easy to see that, this sequence space does not depend upon the neighborhood basis chosen and is an invariant of the Fréchet space $\mathcal{X}$.

In this subsection we will use a spacial generating norm-system for the analytic function spaces, unless stated otherwise. To describe these norms, suppose a Stein manifold $M$ of dimension $d$ is given. We choose a strictly plurisubharmonic $C^\infty$ exhaustion function, $\rho$, of $M$ and consider a sequence $r_p \uparrow \infty$ such that the sub-level sets $D_p \doteq \{z \in M; \ \rho(z)<r_p\},\ p=1,2,\dots$, are strictly pseudoconvex. We set $K_p\doteq \bar{D}_p, \ p=1,2,\dots$. Following (\cite{Zah2}, c.f. \cite{Zah1}) we choose for each $p$, a Hilbert space, $H_p$, that satisfies;
\[
    A(K_p) \hookrightarrow H_p \hookrightarrow AC(K_p) \hookrightarrow \mathcal{O}(D_p)
\]
where $A(K_p)$ denotes the germs of analytic functions on the compact set $K_p$ with the inductive limit topology, $AC(K_p)$ denotes the Banach space that is the closure of $A(K_p)$ in $C(K_p)$, the Banach space of continuous functions on $K_p$ with the sup-norm, and the $\hookrightarrow$'s are imbeddings with dense range, $p=1,2,\dots$. The norms that we will use to generate the topology $\mathcal{O}(M)$ will be the hilbertian norms of $H_p$'s, $\{\|\ \|_p\}_p$. Their corresponding unit balls in $\mathcal{O}(M)$ will be denoted by $U_p$, $p=1,2,\dots$. The primary reason of our usage of these semi-norms is because of the beautiful formula of Nivoche, Poletsky, Zaharyuta:

\begin{align} \label{eqn:3.1}
\forall p<q \quad \lim_n \frac{\mathcal{E}_n(p,q)}{\left(\frac{2\pi d! n}{\tilde{C}(K_p,D_q)}\right)^\frac{1}{d}} = 1
\end{align}
where $\tilde{C}(K_p,D_q)=\sup\{\int_{K_p} (d d^c u)^n:\ u \in PSH(D_q)-1\leq u \leq 0\}$ (Proposition 4.6, 4.8, 4.12 on \cite{Zah2}). We refer the reader to \cite{Zah2} and \cite{Zah1} for a proof and a discussion about the history of this formula.

An immediate consequence of equation \eqref{eqn:3.1} is;

\begin{prop} \label{prop:3.2}
Let $M$ be a Stein manifold of dimension $d$. Then 
\[
    \Delta(\mathcal{O}(M)) \equiv \{(t_n):\ \forall p \ \exists q \text{ such that } \sup_n |t_n| e^{-\alpha_n/c(p,q)} < \infty\}
\]
where for $p<q$, $c(p,q) \doteq \tilde{C}(K_p,D_q)^\frac{1}{d}$, and $\alpha_n \doteq (2\pi d!)^\frac{1}{d} n^\frac{1}{d}, \ n=1,2,\dots$.
\end{prop}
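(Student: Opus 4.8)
The plan is to read the Proposition directly off the Nivoche--Poletsky--Zaharyuta formula \eqref{eqn:3.1}. First I would clean up the denominator there: since
\[
\left(\frac{2\pi d!\,n}{\tilde{C}(K_p,D_q)}\right)^{1/d}=\frac{(2\pi d!)^{1/d}n^{1/d}}{\tilde{C}(K_p,D_q)^{1/d}}=\frac{\alpha_n}{c(p,q)},
\]
the formula \eqref{eqn:3.1} becomes the clean asymptotic statement
\[
\lim_n\frac{c(p,q)\,\mathcal{E}_n(p,q)}{\alpha_n}=1,\qquad\text{i.e.}\qquad \mathcal{E}_n(p,q)=\frac{\alpha_n}{c(p,q)}(1+o(1))\quad(p<q).
\]
Both $\Delta(\mathcal{O}(M))$ and the set on the right-hand side of the Proposition have the quantifier shape ``$\forall p\,\exists q$'', the inner conditions being $t_ne^{-\mathcal{E}_n(p,q)}\to0$ and $\sup_n|t_n|e^{-\alpha_n/c(p,q)}<\infty$ respectively. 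So I would prove the two inclusions separately, in each case starting from the inner condition for one index and deducing the other after replacing $q$ by a suitably larger index.

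The mechanism for the index shift is as follows. Since the $\|\cdot\|_q$ form an increasing fundamental system of seminorms, $U_{q'}\subseteq C\,U_q$ for $q'>q$, so $\mathcal{E}_n(p,q)=-\ln d_n(U_q,U_p)$ is, up to an additive constant, nondecreasing in $q$; dividing by $\alpha_n$ and using the asymptotic shows that $q\mapsto c(p,q)$ is nonincreasing. Now fix $p$. For the inclusion $\Delta(\mathcal{O}(M))\subseteq\{\dots\}$, I take $(t_n)\in\Delta(\mathcal{O}(M))$ and choose $q_1$ with $t_ne^{-\mathcal{E}_n(p,q_1)}\to0$, so that $|t_n|\le Ce^{(\alpha_n/c(p,q_1))(1+o(1))}$; choosing $q_2>q_1$ with $c(p,q_2)<c(p,q_1)$ strictly, the gap
\[
\frac{\alpha_n}{c(p,q_2)}-\frac{\alpha_n}{c(p,q_1)}(1+o(1))=\alpha_n\Big(\tfrac1{c(p,q_2)}-\tfrac1{c(p,q_1)}\Big)+o(\alpha_n)\longrightarrow+\infty
\]
dominates the error term because $\alpha_n\to\infty$, whence $|t_n|e^{-\alpha_n/c(p,q_2)}\to0$ and in particular $(t_n)$ lies in the right-hand set. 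The reverse inclusion is symmetric: starting from $|t_n|\le Ce^{\alpha_n/c(p,q_1)}$ and choosing $q_2>q_1$ with $c(p,q_2)<c(p,q_1)$, the same gap forces $t_ne^{-\mathcal{E}_n(p,q_2)}\to0$ via $\mathcal{E}_n(p,q_2)\sim\alpha_n/c(p,q_2)$.

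The hard part will be to guarantee that the shift is actually available, i.e. that $q\mapsto c(p,q)$ is \emph{strictly} decreasing (more precisely that $\inf_q c(p,q)<c(p,q_1)$ for every $q_1>p$). The asymptotic \eqref{eqn:3.1} together with the monotonicity of the diameters gives only nonstrict monotonicity, whereas strictness is exactly what separates the leading terms $\alpha_n/c(p,q)$ for different $q$ by a quantity of order $\alpha_n$, letting them beat the $o(\alpha_n)$ error in \eqref{eqn:3.1} and converting between ``$\to0$'' and ``$\sup<\infty$''. I would obtain this from pluripotential theory: since $D_{q_1}\Subset D_{q_2}$ are distinct strictly pseudoconvex sublevel sets, the relative extremal functions of $K_p$ in $D_{q_1}$ and in $D_{q_2}$ genuinely differ, and the comparison/domination principle (see \cite{KLIMEK}) yields a strict drop of the relative capacity $\tilde{C}(K_p,D_q)$, hence of $c(p,q)$. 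Once this strict monotonicity is established, the two inclusions above close the proof.
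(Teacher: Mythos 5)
Your skeleton---rewriting \eqref{eqn:3.1} as $\mathcal{E}_n(p,q)=\big(1+o(1)\big)\alpha_n/c(p,q)$ and proving the two inclusions via an index shift whose gain of order $\alpha_n$ beats the $o(\alpha_n)$ error---is exactly the paper's, and you have correctly identified the crux: for each $q_1>p$ you need an index $q_2$ with a \emph{strict} drop $c(p,q_2)<c(p,q_1)$. The gap is that your justification of this drop does not work as stated. ``The relative extremal functions genuinely differ'' does not imply the capacities differ: writing $u_i=u^*_{K_p,D_{q_i}}$, the two functions certainly differ on the annulus $D_{q_1}\setminus K_p$, but the Monge--Amp\`ere masses defining $\tilde{C}$ are concentrated on $K_p$, where $u_1=u_2=-1$. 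Worse, the natural applications of the comparison/domination principle run in the \emph{wrong direction}: with $\eta=-\sup_{\partial D_{q_1}}u_2>0$, the competitor $(u_2+\eta)/(1-\eta)\le u_1$ together with the mass comparison on the contact set yields $\tilde{C}(K_p,D_{q_2})\ge(1-\eta)^d\,\tilde{C}(K_p,D_{q_1})$, i.e.\ a \emph{lower} bound on the capacity you want to shrink. A genuine pluripotential proof of strictness exists, but it needs the opposite rescaling: setting $m=\min_{\partial D_{q_1}}u_2\in(-1,0)$, one shows $u_2\ge(1+m)u_1+m$ on $D_{q_1}$ by the domination principle applied on $D_{q_1}\setminus K_p$ (where $u_2$ is maximal), then applies Demailly's mass inequality on the contact set $K_p$ to get $\tilde{C}(K_p,D_{q_2})\le(1+m)^d\,\tilde{C}(K_p,D_{q_1})$, and one must separately check $m>-1$, which uses that $K_p$ is a sublevel set of the psh exhaustion, hence psh-convex in $D_{q_2}$. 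None of these steps is in your sketch, and they are where all the work lies.

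The paper sidesteps this entirely and gets the strict gain by functional analysis: since $\mathcal{O}(M)$ has $\underline{DN}$ in the chosen grading, Proposition \ref{prop:1.1} (2') gives $\mathcal{E}_n(p,q)\le(1-\lambda)\mathcal{E}_n(p,q^+)+C$ for a suitable $q^+$ and $0<\lambda<1$; the multiplicative factor $1-\lambda$ then absorbs the $(1\pm\varepsilon)$ errors from \eqref{eqn:3.1} in both inclusions, with no capacity strictness needed. In fact, dividing that inequality by $\alpha_n$ and letting $n\to\infty$ \emph{proves} your desired strict monotonicity, $1/c(p,q^+)\ge(1-\lambda)^{-1}\cdot 1/c(p,q)$, as a corollary---so the cheapest repair of your argument is to replace your pluripotential lemma by the paper's interpolation step. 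It is also telling that in the proof of Theorem \ref{theorem:3} the author invokes a condenser (modulus) inequality only for $d=1$, via \cite{S-N}, and resorts to a width-subadditivity trick for $d>1$: soft superadditivity of $1/c$ is not freely available in several variables, which should make you suspicious of any one-line appeal to the comparison principle here.
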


\begin{proof}
\ 
\begin{enumerate}
\item [($\subseteq$): ] Choose a $(t_n)_n \in \Delta(\mathcal{O}(M))$. Fix $p$ and choose $q$ such that 
\[
    \sup_n |t_n| e^{-\mathcal{E}_n(p,q)} < +\infty.
\]
Now in view of the (2') of Proposition \ref{prop:1.1} there exists a $q^+$, $C>0$ and $0<\lambda<1$ such that;
\[
    \mathcal{E}_n(p,q) \leq (1-\lambda) \mathcal{E}_n (p,q^+)+C.
\]

For $\varepsilon>0$ such that $(1+\varepsilon) < \frac{1}{1-\lambda}$ in view of \eqref{eqn:3.1} there exists an $N$ such that
\[
    (1-\varepsilon)\frac{\alpha_n}{C(p,q^+)} \leq \mathcal{E}_n(p,q^+) \leq (1+\varepsilon)\frac{\alpha_n}{C(p,q^+)}, \quad n \geq N.
\]
Hence we have;
\begin{align*}
    |t_n| \leq C_1 e^{\mathcal{E}_n(p,q)} & \leq C_2 e^{(1-\lambda)\mathcal{E}_n(p,q^+)} \\ & \leq e^{(1-\lambda)(1+\varepsilon)\frac{\alpha_n}{C(p,q^+)}} \leq e^\frac{\alpha_n}{C(p,p^+)}, \quad n \geq N.
\end{align*}

\item [($\supseteq$): ] Choose a sequence $(t_n)$ from the right hand side. For a fixed $p$, choose a $q$ such that $\sup_n |t_n| e^{-\alpha_n/c(p,q)} < \infty$. By (2') of Proposition \ref{prop:1.1} choose $k$, $0<\rho<1$ and $C>0$ so that $\mathcal{E}_n(p,q) \leq (1-\rho)\mathcal{E}_n (p,k)+C$. Let $\varepsilon>0$ be so that $1-\rho < 1-\varepsilon$ and choose an $N$ (from \eqref{eqn:3.1})
\[
    \frac{-(1+\varepsilon)\alpha_n}{C(p,q)} \leq -\mathcal{E}_n(p,q) \leq -\frac{(1-\varepsilon)\alpha_n}{C(p,q)}, \quad n \geq N
\]
Hence 
\[
    |t_n|^{(1-\varepsilon)} e^{-\mathcal{E}_n(p,q)} \leq \left( |t_n| e^{\frac{-\alpha_n}{C(p,q)}} \right)^{1-\varepsilon} \leq Q < \infty.
\]
for some $Q>0$. Moreover since $-\mathcal{E}_n(p,q) \geq -(1-\varepsilon)\mathcal{E}_n(p,k)-C$ we have;
\[
    \sup_n\left( |t_n| e^{-\mathcal{E}_n(p,k)} \right)^{1-\varepsilon} \leq Q e^C \Rightarrow \sup_n |t_n| d_n(U_k,U_p) < \infty.
\]
It follows that $(t_n) \in \Delta(\mathcal{O}(M))$.
\end{enumerate}
\end{proof}

\begin{cor} \label{corollary:3.2}
Let $M$ be a Stein manifold of dimension $d$. Then \[
    \Delta(\mathcal{O}(M)) = \Delta(\Lambda_1(n^\frac{1}{d})) \Leftrightarrow \inf_p \sup_{q\geq p} \frac{1}{C(p,q)} = 0 \Leftrightarrow \delta(\mathcal{O}(M)) = \delta(\Lambda_1(n^\frac{1}{d})).
\]
\end{cor}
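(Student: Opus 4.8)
The plan is to funnel both stated equivalences through the single number
\[
\beta \doteq \inf_p \sup_{q\geq p} \frac{1}{C(p,q)},
\]
and to prove that each of the two equalities holds exactly when $\beta=0$. The engine is the Nivoche--Poletsky--Zaharyuta formula \eqref{eqn:3.1}, which in the notation of Proposition \ref{prop:3.2} says that for every $p<q$
\[
\lim_n \frac{\mathcal{E}_n(p,q)}{\alpha_n}=\frac{1}{C(p,q)}, \qquad \alpha_n=(2\pi d!)^{1/d}n^{1/d}.
\]
I first extract two consequences. Fixing any pair $p_0<q_0$ gives $\mathcal{E}_n(p_0,q_0)\sim \alpha_n/C(p_0,q_0)$, so this representative of the associated exponent sequence is equivalent to $n^{1/d}$; hence the associated exponent sequence of $\mathcal{O}(M)$ is equivalent to $n^{1/d}$, $\Lambda_1(\mathcal{E})\cong\Lambda_1(n^{1/d})$, and both $\delta$ and $\Delta$ of these two spaces agree. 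Dividing the two instances of the formula gives the genuine limit $\lim_n \mathcal{E}_n(p,q)/\mathcal{E}_n = C(p_0,q_0)/C(p,q)$ for each $p<q$, where I use $\mathcal{E}_n=\mathcal{E}_n(p_0,q_0)$ as the representative (legitimate, since the conditions below depend on $\mathcal{E}_n$ only through its equivalence class).

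For the equivalence $\delta(\mathcal{O}(M))=\delta(\Lambda_1(n^{1/d}))\Leftrightarrow\beta=0$ I would feed this directly into the Corollary following Proposition \ref{prop:1.4} (equivalently Theorem \ref{theorem:1}), which gives $\delta(\mathcal{X})=\delta(\Lambda_1(\mathcal{E}))$ iff $\inf_p\sup_{q>p}\overline{\lim}_n \mathcal{E}_n(p,q)/\mathcal{E}_n=0$. Since the $\overline{\lim}$ is here the genuine limit $C(p_0,q_0)/C(p,q)$, the right-hand condition reads $C(p_0,q_0)\,\inf_p\sup_{q>p}1/C(p,q)=0$, i.e.\ $\beta=0$; the positive constant $C(p_0,q_0)$ is harmless, and $\sup_{q>p}$ may be replaced by $\sup_{q\geq p}$ because $\tilde C(K_p,D_q)$ is nonincreasing in $q$ (a competitor on $D_{q'}\supseteq D_q$ restricts to one on $D_q$ with the same Monge--Amp\`ere mass on $K_p$), so $q\mapsto 1/C(p,q)$ is nondecreasing and the $q=p$ term never affects the supremum. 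With $\delta(\Lambda_1(\mathcal{E}))=\delta(\Lambda_1(n^{1/d}))$ this yields the second ``$\Leftrightarrow$''.

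For the first equivalence I would start from Proposition \ref{prop:3.2}, which presents $\Delta(\mathcal{O}(M))$ as the set of $(t_n)$ with $\forall p\,\exists q:\ \sup_n|t_n|e^{-\alpha_n/C(p,q)}<\infty$, together with the elementary fact that in a finite type power series space $\Delta(\Lambda_1(n^{1/d}))=\{(t_n):\ \overline{\lim}_n \ln|t_n|/n^{1/d}\leq 0\}$ (using $d_n(U_q,U_p)=e^{-(r_q-r_p)\alpha_n}$ and letting the available gap $r_q-r_p$ range up to $1-r_p$, whose infimum over $p$ is $0$). The inclusion $\Delta(\Lambda_1(n^{1/d}))\subseteq\Delta(\mathcal{O}(M))$ always holds because $1/C(p,q)>0$. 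If $\beta=0$, then any $(t_n)\in\Delta(\mathcal{O}(M))$ satisfies $\overline{\lim}_n\ln|t_n|/\alpha_n\leq 1/C(p,q)\leq\sup_{q\geq p}1/C(p,q)$ for a suitable $q$ at each $p$; taking the infimum over $p$ forces $\overline{\lim}_n\ln|t_n|/\alpha_n\leq\beta=0$, giving the reverse inclusion and equality. If $\beta>0$, the explicit sequence $t_n\doteq e^{(\beta/2)\alpha_n}$ lies in $\Delta(\mathcal{O}(M))$ (at each $p$ one may pick $q$ with $1/C(p,q)>\beta/2$, so $\sup_n|t_n|e^{-\alpha_n/C(p,q)}<\infty$) yet violates $\overline{\lim}_n\ln|t_n|/\alpha_n\leq0$, so the two diametral dimensions differ. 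Hence the first equality is also equivalent to $\beta=0$, and the whole chain closes.

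The step I expect to demand the most care is the passage from the asymptotics \eqref{eqn:3.1} to statements about the exponent-sequence \emph{class} of $\mathcal{O}(M)$: one must check that the special Zaharyuta grading $\{\|\cdot\|_p\}_p$ used in Proposition \ref{prop:3.2} is admissible for reading off the associated exponent sequence --- equivalently, that it is comparable to a grading coming from $\underline{DN}$ and $\Omega$ --- so that $\mathcal{E}_n$ is indeed equivalent to $n^{1/d}$ and the Corollary to Proposition \ref{prop:1.4} applies with these norms. The only other delicacy is cosmetic: Proposition \ref{prop:3.2} is phrased with ``$\sup_n<\infty$'' whereas the $\delta$-side carries strict inequalities, but since each equivalence is decided solely by whether $\beta$ vanishes, these endpoint differences never enter.
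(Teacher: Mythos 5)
Your proposal is correct and follows essentially the same route as the paper: both funnel each equality through the vanishing of $\inf_p \sup_{q\geq p} 1/C(p,q)$ via the Nivoche--Poletsky--Zaharyuta asymptotics \eqref{eqn:3.1}, using the description of $\Delta(\mathcal{O}(M))$ from Proposition \ref{prop:3.2} together with a test sequence of the form $e^{c\,n^{1/d}}$ when the infimum is positive (the paper's $e^{\kappa n^{1/d}}$, your $e^{(\beta/2)\alpha_n}$) for the $\Delta$-equivalence, and Proposition \ref{prop:1.4} (via its corollary) for the $\delta$-equivalence. The only deviations are cosmetic: you verify the two ``always true'' inclusions directly from positivity of $1/C(p,q)$ and the corollary to Proposition \ref{prop:1.4}, where the paper instead cites \cite{AKT2} and the monotonicity of approximate diametral dimension under subspaces, and you make explicit the $q\geq p$ versus $q>p$ endpoint and grading-admissibility points that the paper passes over silently.
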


\begin{proof}
Suppose that $\Delta(\mathcal{O}(M)) = \Delta(\Lambda_1(n^\frac{1}{d}))$. Suppose that there is an $\varepsilon>0$ such that $\inf_p \sup_{q\geq p} \frac{1}{c(p,q)} > \varepsilon > 0$. In view of equation \eqref{eqn:3.1} this implies,
\[
    \inf_p \sup_{q\geq p} \lim_n \frac{\mathcal{E}_n(p,q)}{n^\frac{1}{d}} > \varepsilon(d!2\pi)^\frac{1}{d}
\]
Setting $\kappa \doteq \varepsilon(d!2\pi)^\frac{1}{d}$, we have;
\[
    \forall p \ \exists q \text{ and } N \text{ such that } \frac{1}{d_n(U_q,U_p)} \geq e^{\kappa n^\frac{1}{d}}, \quad n \geq N.
\]
Hence, 
\[
    e^{\kappa n^\frac{1}{d}} \in \Delta(\mathcal{O}(M)) = \Delta(\Lambda_1(n^\frac{1}{d})) = \{(\xi_n):\ \forall\ r<1, \quad \lim_n |\xi_n| r^{n^{1/d}} = 0\}.
\]
This contradiction shows that $\inf_p \sup_{q\geq p} 1/c(p,q) = 0$.

On the other hand if $\inf_p \sup_{q\geq p} 1/c(p,q) = 0$, and $r<1$ is given, choose a $p$ such that, in the notation of Proposition \ref{prop:3.1},
\[
    \sup_{q\geq p} \frac{\alpha_n}{c(p,q)n^\frac{1}{d}} \leq -\ln r \quad \forall n.
\]
In view of Proposition \ref{prop:3.1}, for $p$ there is a $q>>p$ such that $\sup_n |t_n| e^{\frac{-\alpha_n}{c(p,q)}} < \infty$. Hence $\sup_n |t_n| r^{n^\frac{1}{d}} < \infty$. This shows that 
\[
(t_n) \in \{(\xi_n):\ \forall r<1, \ \lim_n|\xi_n|r^{n^\frac{1}{d}} = 0\} = \Delta(\Lambda_1(n^\frac{1}{d})).
\]
So $\Delta(\mathcal{O}(M)) \subseteq \Delta(\Lambda_1(n^\frac{1}{d}))$.

On the other hand, $(n^\frac{1}{d})_n$, being the associated exponent sequence of $\mathcal{O}(M)$, always satisfies $\Delta(\Lambda_1(n^\frac{1}{d})) \subseteq \Delta(\mathcal{O}(M))$ (Proposition 1.1, \cite{AKT2}). So we have $\Delta(\Lambda_1(n^\frac{1}{d})) =\Delta(\mathcal{O}(M))$. Since $\mathcal{O}(M)$ is isomorphic to a closed subspace of $\Lambda_1(n^{1/d})$, the other equivalence directly follows from Proposition \ref{prop:1.4} and the fact that approximate diametral dimension of a nuclear Fréchet space is greater than the approximate diametral dimension of its subspaces (\cite{BPR}).
\end{proof} 

\begin{thm} \label{theorem:3}
Let $M$ be a Stein manifold of dimension $d$. Then either $\Delta(\mathcal{O}(M)) = \Delta(\mathcal{O}(\Delta^d))$ or $\Delta(\mathcal{O}(M)) = \Delta(\mathcal{O}(\mathbb{C}^d))$. 
\end{thm}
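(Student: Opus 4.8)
The plan is to compress the whole statement into a dichotomy for a single numerical invariant. Following Proposition~\ref{prop:3.2} and Corollary~\ref{corollary:3.2}, I set $\beta(p,q):=1/c(p,q)=\tilde C(K_p,D_q)^{-1/d}$ and
\[
\gamma:=\inf_{p}\ \sup_{q>p}\ \beta(p,q).
\]
Corollary~\ref{corollary:3.2} already identifies the vanishing of $\gamma$ with the polydisc case, $\Delta(\mathcal{O}(M))=\Delta(\Lambda_1(n^{1/d}))=\Delta(\mathcal{O}(\Delta^d))$. Hence the entire theorem amounts to the claim that $\gamma$ can only take the values $0$ and $\infty$, together with the identification of the value $\infty$ with the case $\Delta(\mathcal{O}(M))=\Delta(\mathcal{O}(\mathbb{C}^d))$. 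I would carry out these two parts separately.

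The identification of $\gamma=\infty$ is the routine half. Suppose $\sup_{q>p}\beta(p,q)=\infty$ for every $p$. Plugging this into the description of $\Delta(\mathcal{O}(M))$ in Proposition~\ref{prop:3.2}, a sequence $(t_n)$ belongs to $\Delta(\mathcal{O}(M))$ exactly when, for every $p$, some $q$ satisfies $\sup_n|t_n|e^{-\alpha_n\beta(p,q)}<\infty$, that is $\overline{\lim}_n \ln|t_n|/\alpha_n\le\beta(p,q)$; since $\sup_q\beta(p,q)=\infty$ this is achievable for every $p$ precisely when $\overline{\lim}_n \ln|t_n|/n^{1/d}<\infty$. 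Thus $\Delta(\mathcal{O}(M))=\{(t_n):\ \overline{\lim}_n \ln|t_n|/n^{1/d}<\infty\}$, which (using $\mathcal{O}(\mathbb{C}^d)\cong\Lambda_\infty(n^{1/d})$ and a computation parallel to the one for $\Lambda_1$ in Corollary~\ref{corollary:3.2}) is exactly $\Delta(\mathcal{O}(\mathbb{C}^d))$.

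The heart of the argument is the dichotomy $\gamma\in\{0,\infty\}$, and here a superadditivity estimate does the work. From set--monotonicity of the capacity, $\beta(p,q)$ is nondecreasing in $q$ and nonincreasing in $p$, so $\beta(p,\infty):=\lim_{q\to\infty}\beta(p,q)$ exists in $(0,\infty]$, is nonincreasing in $p$, and $\gamma=\lim_{p\to\infty}\beta(p,\infty)$. For $p<q<r$ the submultiplicativity of Kolmogorov diameters under the factorization $\mathcal{X}_r\to\mathcal{X}_q\to\mathcal{X}_p$ gives $d_{m+k}(U_r,U_p)\le d_m(U_q,U_p)\,d_k(U_r,U_q)$, hence
\[
\mathcal{E}_{m+k}(p,r)\ \ge\ \mathcal{E}_m(p,q)+\mathcal{E}_k(q,r),\qquad m,k\ge 0.
\]
Writing $m=\lfloor\lambda N\rfloor$, $k=N-m$, dividing by $N^{1/d}$ and letting $N\to\infty$, the Nivoche--Poletsky--Zaharyuta asymptotics \eqref{eqn:3.1} turn this into $\beta(p,r)\ge\lambda^{1/d}\beta(p,q)+(1-\lambda)^{1/d}\beta(q,r)$ for every $0<\lambda<1$; maximizing the right side over $\lambda$ (duality of $\ell^{d}$ and $\ell^{d'}$ with $d'=d/(d-1)$, valid for $d\ge 2$) yields
\[
\beta(p,r)^{d'}\ \ge\ \beta(p,q)^{d'}+\beta(q,r)^{d'},\qquad p<q<r.
\]
So $B:=\beta^{d'}$ is superadditive, and letting $r\to\infty$ gives $B(p,\infty)\ge B(p,q)+B(q,\infty)$. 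Now assume $\gamma>0$. If $\beta(p,\infty)=\infty$ for all $p$ then $\gamma=\infty$ and we are finished; otherwise fix $p_0$ with $B(p_0,\infty)<\infty$, let $q\to\infty$ in the last inequality, and use $B(p_0,q)\to B(p_0,\infty)$ together with $B(q,\infty)\to\gamma^{d'}$ (note $B(q,\infty)\le B(p_0,\infty)<\infty$ for $q>p_0$) to obtain $B(p_0,\infty)\ge B(p_0,\infty)+\gamma^{d'}$, forcing $\gamma=0$ and contradicting $\gamma>0$. Hence $\gamma\in\{0,\infty\}$.

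The step I expect to be delicate is the passage from the diameter submultiplicativity to the clean superadditivity of $\beta^{d'}$: one must take the limit \eqref{eqn:3.1} simultaneously along the two proportionally growing blocks $m=\lfloor\lambda N\rfloor$ and $k=N-m$ and verify the Hölder optimization in $\lambda$. The remaining loose end is the degenerate dimension $d=1$, where $d'=\infty$ and the estimate only reproduces monotonicity; there $M$ is an open Riemann surface and the required dichotomy is the classical parabolic/hyperbolic one, which I would invoke directly.
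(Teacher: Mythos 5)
For $d\ge 2$ your argument is correct and is essentially the paper's own proof in a lightly sharpened form. The paper derives the same diameter superadditivity $\mathcal{E}_{n+m}(\alpha,\beta)\ge \mathcal{E}_n(\gamma,\beta)+\mathcal{E}_m(\alpha,\gamma)$ (its \eqref{eqn:3.2}), feeds it into the Nivoche--Poletsky--Zaharyuta asymptotics \eqref{eqn:3.1}, but only at the symmetric split $m=n$, obtaining (up to $\varepsilon$'s) $\beta(p,q)+\beta(q,r)\le 2^{1/d}\beta(p,r)$ in your notation; the same triple limit $r,q,p\to\infty$ then yields $2\le 2^{1/d}$ when $0<\gamma<\infty$, the contradiction you also reach. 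Your Hölder optimization over $\lambda$, packaging this as superadditivity of $\beta^{d'}$ with $d'=d/(d-1)$, is clean but buys nothing extra here: the $\lambda=\frac12$ case already suffices. Your two identifications likewise match the paper: $\gamma=0$ is the polydisc case by Corollary \ref{corollary:3.2}, and $\gamma=\infty$ gives $\Delta(\mathcal{O}(M))=\Delta(\mathcal{O}(\mathbb{C}^d))$ exactly as in case 1 of the paper's proof (your ``precisely when'' is not literally an equivalence for a fixed $q$, but taking the supremum over $q$, which is infinite, makes it correct; you also use implicitly that $\tilde{C}(K_p,D_q)>0$, hence $\beta(p,q)<\infty$ for finite $p<q$, which is true since $K_p$ has nonempty interior).

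The genuine gap is $d=1$, and your proposed patch does not close it. The classical parabolic/hyperbolic dichotomy distinguishes $\beta(p,\infty)=\infty$ (no Green's function, vanishing boundary capacity) from $\beta(p,\infty)<\infty$; but the theorem requires, in the hyperbolic case, the strictly stronger conclusion $\gamma=\lim_p\beta(p,\infty)=0$, i.e.\ the exclusion of $0<\gamma<\infty$ --- and that is exactly what your machinery loses at $d=1$, where $d'=\infty$ and the optimized inequality degenerates to $\beta(p,r)\ge\max\{\beta(p,q),\beta(q,r)\}$, mere monotonicity. Nothing in the parabolic/hyperbolic classification by itself forces the moduli of the tail condensers to tend to $0$ on a hyperbolic surface; genuine potential-theoretic input is needed. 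The paper supplies it: for $d=1$ it passes to the Ahlfors--Sario exhaustion and invokes the Sario--Nakai modulus inequality $\beta(p,s)\ge\beta(p,q)+\beta(q,s)$ (its \eqref{eqn:3.3}), i.e.\ superadditivity of $\beta$ itself --- strictly stronger than anything the diameter/Hölder route produces in one variable --- after which your own limiting argument (with $B=\beta$ in place of $\beta^{d'}$) finishes, in combination with Proposition \ref{prop:3.2} and Corollary \ref{corollary:3.2}. Replace ``invoke the classical dichotomy directly'' by this modulus inequality for a suitable exhaustion and your proof is complete; as written, the $d=1$ branch is a gap.
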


\begin{proof}
    Let us fix a Stein manifold $M$ with dimension $d$. We will use, in what follows, the grading on $\mathcal{O}(M)$ described in the beginning of the section. Using the notation above, for a given $p\geq 1$, we set, $c(p) \doteq \lim_{q>p} c(p,q) = \inf_{q>p} c(p,q)$. Since $c(\cdot,\cdot)$ is increasing in the first variable, the sequence $\{c(p)\}$ is an increasing sequence. We have two cases:
\begin{enumerate}
\item [1.] $c(p)$ is zero for all $p\geq 1$.
    Suppose this is the case. Let us choose a $(t_n)_n \in \Delta(\mathcal{O}(\mathbb{C}^d))=\Delta(\Lambda_\infty(n^\frac{1}{d}))$, and choose an $R>1$ and $C>0$, so that $|t_n| \leq C R^{n^\frac{1}{d}}, \ \forall n$. For a given $p$ let us choose a $q$ such that $c(p,q) \leq \frac{1}{\ln (\tilde{R})}$ for some $\tilde{R}$ with $R^{1/(2\pi d!)^\frac{1}{d}} << \tilde{R}$. Then 
\[
    |t_n| e^{-\frac{\alpha_n}{c(p,q)}} \leq |t_n| e^{-\alpha_n \ln \tilde{R}+ n^{1/d}\ln R -n^{1/d}\ln R} \leq C.
\]
So $(t_n)_n \in \Delta(\mathcal{O}(M))$ in view of Proposition \ref{prop:3.1}. Hence 
\[
\Delta(\Lambda_\infty(n^{1/d})) \subseteq \Delta(\mathcal{O}(M)).
\]
However, $\{n^\frac{1}{d}\}_n$, being the associated exponent sequence of $\mathcal{O}(M)$, the inclusion $\Delta(\mathcal{O}(M)) \subseteq \Delta(\Lambda_\infty(n^{1/d}))$ is always true. It follows that in this case $\Delta(\mathcal{O}(M)) = \Delta(\mathcal{O}(\mathbb{C}^d))$.

\item [2.] The sequence $\{c(p)\}$ increases to a non-zero number $c$.
Suppose this is the case. We have two possibilities either $c\in \mathbb{R}$ or $c=\infty$.

Let us suppose that $c>0$ is a real number. Fix natural numbers $\alpha<\gamma<\beta$. For given natural numbers $n$ and $m$, plainly $d_{n+m}(U_\beta,U_\alpha) \leq d_n(U_\beta,U_\gamma)d_m(U_\gamma,U_\alpha)$, where $U_t$'s, $1\leq t < \infty$, are the unit balls corresponding to our grading. In particular we have 
\begin{align} \label{eqn:3.2}
\mathcal{E}_{n+m}(\alpha,\beta) \geq \mathcal{E}_n(\gamma,\beta) + \mathcal{E}_m(\alpha,\gamma); \quad n,m\geq 1
\end{align}

Fix an $\varepsilon >0$ and using \eqref{eqn:3.1}, choose $N=N(\alpha,\gamma,\beta)$ such that
\begin{align*}
    (1-\varepsilon) \frac{\alpha_{n+m}}{c(\alpha,\beta)} & \leq \mathcal{E}_{n+m}(\alpha,\beta) \leq (1+\varepsilon) \frac{\alpha_{n+m}}{c(\alpha,\beta)} \\
    (1-\varepsilon) \frac{\alpha_{n}}{c(\gamma,\beta)} & \leq \mathcal{E}_{n}(\gamma,\beta) \leq (1+\varepsilon) \frac{\alpha_{n}}{c(\gamma,\beta)} \\
    (1-\varepsilon) \frac{\alpha_{m}}{c(\alpha,\gamma)} & \leq \mathcal{E}_{m}(\alpha,\gamma) \leq (1+\varepsilon) \frac{\alpha_{m}}{c(\alpha,\gamma)}, \text{ for } n,m \geq N 
\end{align*}
Hence we have
\[
    (1-\varepsilon) \frac{m^\frac{1}{d}}{c(\alpha,\gamma)} + (1-\varepsilon) \frac{n^\frac{1}{d}}{c(\gamma,\beta)} \leq (1+\varepsilon) \frac{(n+m)^\frac{1}{d}}{c(\alpha,\beta)}
\]
for $n,m \geq N$, taking $m=n$, and $n>N$, we get, after cancellation:
\[
     \frac{(1-\mathcal{E})}{c(\alpha,\gamma)} +  \frac{(1-\mathcal{E})}{c(\gamma,\beta)} \leq  \frac{(1+\mathcal{E}) 2^{1/d}}{c(\alpha,\beta)}
\]
which upon letting first $\beta$ then $\gamma$ and $\alpha$ to go to infinity, respectively, gives $2 \leq 2^{1/d}$. Hence for $d>1$, $c$ must be $\infty$.


For $d=1$, we will use the exhaustion given in \cite[p.~145]{AS} and an associated fundamental hilbertian norm system as explained in the beginning of the section. In this context we will use the \emph{modulus inequality} as given in \cite[p.~14]{S-N}, which in our notation states that for $1\leq p<q<s<\infty$, 
\begin{align} \label{eqn:3.3}
    \frac{1}{c(p,s)} \geq \frac{1}{c(p,q)} + \frac{1}{c(q,s)}
\end{align}

First letting $s\rightarrow \infty$ and then $q \rightarrow \infty$ we see that $1/(c(q))$ approaches to $0$. Hence in this case we conclude that $c=\infty$, as well. So the first possibility does not occur and we conclude that 
\[
    \lim_p \frac{1}{c(p)} = \inf_p \sup_{q \geq p} \frac{1}{c(p,q)} = 0.
\]
The theorem now follows from Corollary \ref{corollary:3.2}.
\end{enumerate}
\end{proof} 

Now we turn our attention to tameness in the spaces of analytic functions. Recall that a Stein manifold $M$ is called \emph{hyperconvex} in case it possesses a bounded plurisubharmonic exhaustion function. We refer the reader to \cite{KLIMEK} and the references therein for an account of hyperconvex manifolds. From a functional analysis point of view, hyperconvex Stein manifolds $M$ are precisely those Stein manifolds that satisfy, $\mathcal{O}(M) \approx \mathcal{O}(\Delta^d)$, $d=\text{dim}M$, (\cite{ZahH,AytM}). Hence for hyperconvex manifold $M$, $\mathcal{O}(M)$ is a tame Fréchet space. Our next and final result tells us that they are the only ones having this property:

\begin{mainthm} \label{theorem:3.3}
    Let $M$ be a Stein manifold. Then $\mathcal{O}(M)$ is tame if and only if $M$ is hyperconvex.
\end{mainthm}

\begin{proof}
Let us fix a Stein manifold $M$ with dimension $d$. In view of the remarks preceding the theorem, it suffices to show that if $\mathcal{O}(M)$ is tame then $\mathcal{O}(M) \approx \mathcal{O}(\Delta^d)$. In view of Theorem \ref{theorem:3} either $\Delta(\mathcal{O}(M)) = \Delta(\mathcal{O}(\mathbb{C}^d))$ or $\Delta(\mathcal{O}(M)) = \Delta(\mathcal{O}(\Delta^d))$. The first case cannot occur. To see this first observe that the assumption on the diametral dimension implies that $\mathcal{O}(M)$ contains a complemented copy of $\mathcal{O}(\mathbb{C}^d)$ by Theorem 1.3 of \cite{AKT1}. Since plainly tameness passes to complemented subspaces and $\mathcal{O}(\mathbb{C}^d)=\Lambda_\infty(n^{1/d})$ is not tame \cite{DF}, indeed this case cannot occur. Now the theorem follows from Corollary \ref{corollary:3.2} and Theorem \ref{theorem:2}.
\end{proof}

\makeatletter
\renewcommand\@biblabel[1]{#1.}
\makeatother











\end{document}